\newtheorem{thm}{Theorem}[section]
\newtheorem{lem}[thm]{Lemma}
\newtheorem{cor}[thm]{Corollary}
\newtheorem{pro}[thm]{Proposition}
\newtheorem{ex}[thm]{Example}
\newtheorem{rmk}[thm]{Remark}
\newtheorem{defi}[thm]{Definition}
\newcommand {\emptycomment}[1]{}
\newcommand{\dTD}{\dM^{\Theta_\huaD}}
\newcommand{\lon }{\,\rightarrow\,}
\newcommand{\be }{\begin{equation}}
\newcommand{\ee }{\end{equation}}
\newcommand{\g}{\mathfrak g}
\newcommand{\huaB}{\mathcal{B}}
\newcommand{\huaD}{\mathcal{D}}
\newcommand{\huaH}{\mathcal{H}}
\newcommand{\frkk}{\mathfrak k}
\newcommand{\frkC}{\mathfrak C}
\newcommand{\frkK}{\mathfrak K}
\newcommand{\hk}{\widehat{\mathfrak K}}\newcommand{\pk}{\mathfrak K'}
\newcommand{\dM}{\mathrm{d}}
\newcommand{\EXP}{\mathrm{Exp}}
\newcommand{\Id}{{\rm{Id}}}
\newcommand{\br}[1]{   [ \cdot,    \cdot  ]   }
\newcommand{\Hom}{\mathrm{Hom}}
\newcommand{\Ad}{\mathrm{Ad}}
\newcommand{\Aut}{\mathrm{Aut}}
\newcommand{\gl}{\mathfrak {gl}}
\begin{document}

\title{Cohomologies of difference Lie groups and van Est theorem}

\author{Jun Jiang}
\address{Department of Mathematics, Jilin University, Changchun 130012, Jilin, China}
\email{jiangjun20@mails.jlu.edu.cn}

\author{Yunnan Li}
\address{School of Mathematics and Information Science, Guangzhou University, Guangzhou 510006, China}
\email{ynli@gzhu.edu.cn}

\author{Yunhe Sheng}
\address{Department of Mathematics, Jilin University, Changchun 130012, Jilin, China}
\email{shengyh@jlu.edu.cn}


\begin{abstract}
A difference Lie group is a Lie group equipped with a difference operator, equivalently a crossed homomorphism with respect to the adjoint action. In this paper, first we introduce the notion of a representation of a difference Lie group, and establish the relation between representations of difference Lie groups and representations of difference Lie algebras via differentiation and integration. Then we introduce a cohomology theory for difference Lie groups and justify it via the van Est theorem. Finally, we classify abelian extensions of difference Lie groups using the second cohomology group as applications.
\end{abstract}

\keywords{difference Lie group, crossed homomorphism, cohomology, van Est map, abelian extension}

\renewcommand{\thefootnote}{}
\footnotetext{2020 Mathematics Subject Classification.  13N99, 17B40, 17B56, 20J06}

\maketitle

\vspace{-1cm}
\tableofcontents

\allowdisplaybreaks


\section{Introduction}


Crossed homomorphisms on groups  first appeared in Whitehead's earlier work ~\cite{Whi}  and were later applied to study non-abelian Galois cohomology~\cite{Se}. Recently, crossed homomorphisms were used to study Hopf-Galois structures \cite{Tsang} and construct representations of mapping class groups of surfaces \cite{CS,Kasahara}.
In the definition of a crossed homomorphism $\huaD:G\to H$ on groups, there is an action of the  group $G$ on the group $H$. A crossed homomorphism on a   group $G$ with respect to the adjoint action is called a difference operator in this paper. Meanwhile, a   group together with a difference operator is called a difference   group. Difference operators on groups were studied in ~\cite{GLS}
as the inverse of Rota-Baxter operators on groups introduced there with the  motivation from factorization problems and integrable systems~\cite{RS1,STS}. In the category of Hopf algebras, similar structures are called bijective 1-cocycles, and   applied to construct solutions of the quantum Yang-Baxter equation~\cite{AGV}.

In the Lie algebra context, the concept of crossed homomorphisms  was introduced in \cite{Lue} in the study of non-abelian extensions of Lie algebras. Crossed homomorphisms on Lie algebras with respect to the adjoint representation are in fact difference operators (also called differential operators of weight 1), abstracted from original instance in numerical analysis to algebraic settings of associative and Lie algebras~\cite{GK,Lev,LGG}. Lie's third theorem holds for crossed homomorphisms on Lie groups and Lie algebras \cite{GLS,JS,MQ}. Crossed homomorphisms have various applications, e.g. in the study of post-Lie algebras and post-Lie Magnus expansion \cite{MQ} and representations of Cartan type Lie algebras \cite{PSTZ}.

In this paper, we study representations and cohomologies of difference Lie groups, and give applications in the study of abelian extensions of difference Lie groups.

A representation of a Lie group is a smooth homomorphism from this Lie group to the general linear Lie group of a vector space.
A basic tool to study representations of Lie groups is the usage of the corresponding ``infinitesimal'' representations of Lie algebras.
The representation theory of connected compact Lie groups parallels to
that of semisimple Lie algebras. 
We introduce the notion of representations of difference Lie groups and establish the relation with representations of difference Lie algebras via differentiation and integration. More precisely, one can obtain a representation of the difference Lie algebra by  differentiating  a representation of a difference Lie group, and conversely one can also obtain a representation of the difference Lie group by integrating a representation of a difference Lie algebra.

A classical approach to study a mathematical structure is to associate to it invariants. Among these, cohomology theories occupy a  central position as they enable for example to control deformations or extension problems. The cohomology theory of difference Lie algebras was given in \cite{JS}, and it was shown that infinitesimal deformations of a difference Lie algebra are classified by the second cohomology group. In this paper, we establish the cohomology theory for difference Lie groups with coefficients in arbitrary representations. To justify its correctness, we show that the van Est theorem holds for cohomologies of difference Lie groups and cohomologies of difference Lie algebras given in \cite{JS}. The classical van Est isomorphism \cite{Van}  gives the relation between the differentiable cohomology of Lie groups and the cohomology of Lie algebras. See \cite{AC,CD,Cr, Hou, Li,MS, PPT, RS}
for various van Est type theorems  and applications.

Finally we study abelian extensions of difference Lie groups as applications. We show that  abelian extensions of difference Lie groups are classified by the second cohomology group given above. As a byproduct, we classify difference operators on the semidirect product Lie group via certain quotient of the second cohomology group of the difference operator. See \cite{Ne} for more details of abelian extensions of infinite dimensional Lie groups.

The paper is organized as follows. In Section \ref{sec:rep}, we introduce the notion of   representations of  difference Lie groups, and establish its relation with representations of difference Lie algebras via differentiation and integration. In Section \ref{sec:coh}, we introduce a cohomology theory for difference Lie groups. To do that, first we give the cohomology of a difference operator, and then combine the cohomology of a difference operator and the cohomology of a Lie group to obtain the cohomology of a difference Lie group. The relation between these cohomology groups are given by a long exact sequence (Theorem \ref{cohomology-exact-DG}). In Section \ref{sec:van}, we show that the van Est theorem holds for cohomologies of difference Lie groups and cohomologies of difference Lie algebras. In Section \ref{sec:ext}, we classify abelian extensions of difference Lie groups in terms of the second cohomology group introduced in Section \ref{sec:coh}.

\vspace{2mm}
\noindent
{\bf Acknowledgements. } This research is supported by the National Natural Science Foundation of China (Grant Nos. 11922110, 12071094) and Guangdong Basic and Applied Basic Research Foundation (Grant No. 2022A1515010357).

\section{Representations of difference Lie groups and difference Lie algebras}\label{sec:rep}
In this section, we introduce the notion of a representation of a difference Lie group, and establish the relation between representations of difference Lie groups and representations of difference Lie algebras via differentiation and integration.

\subsection{Representations of difference Lie groups}

We introduce the notion of a representation of a difference Lie group, which gives rise to the semidirect product difference Lie group.
\begin{defi}
Let $G$ be a Lie group.  A smooth map $\huaD: G\lon G$ is called  a {\bf difference operator} on $G$ if the following equality holds:
\begin{equation}\label{defiD}
\huaD(gh)=\huaD(g)g\huaD(h)g^{-1}, \quad \forall g, h\in G.
\end{equation}
 A {\bf difference Lie group}  $(G, \huaD)$ is a Lie group $G$ equipped with a difference operator $\huaD$.
\end{defi}

\begin{ex}{\rm
Let $G$ be an abelian Lie group. Then a Lie group homomorphism $\huaD: G\lon G$ is a difference operator.}
\end{ex}

\begin{ex}{\rm
Let $G$ be a Lie group. Then the inverse map $(\cdot)^{-1}: G\lon G$ is a difference operator and $(G, (\cdot)^{-1})$ is a difference Lie group.}
\end{ex}

\begin{ex}{\rm
Let $(G, \huaB)$ be a Rota-Baxter Lie group,  i.e. $G$ is a Lie group and $\huaB:G\to G$ is a smooth map satisfying
$$
\huaB(g)\huaB(h)=\huaB(g\Ad_{\huaB(g)}h),\quad \forall g,h\in G.
$$  If $\huaB $ is   invertible, then $(G, \huaB^{-1})$ is a difference Lie group.}
\end{ex}

\begin{ex}\label{ex:Gln}{\rm
Let $G$ be the real matrix Lie group $GL_n(\mathbb R)$. Then the adjugate map $(\cdot)^*: G\lon G$ is a difference operator and $(G, (\cdot)^*)$ is a difference Lie group.}
\end{ex}

\begin{ex}\label{ex:Gln'}{\rm
Let $G$ be the complex matrix Lie group $GL_n(\mathbb C)$. Then $G$ endowed with the map taking any $g\in G$ to $\overline{g}g^{-1}$ is a difference Lie group, where $\overline{g}$ is the complex conjugate of $g$.}
\end{ex}

\begin{lem}
Let $(G, \huaD)$ be a difference Lie group. Then $\huaD(e_G)=e_G$, where $e_G$ is the unit  of $G$, and
\begin{equation}\label{defiinverse}
\huaD(g^{-1})=(\huaD(g)g)^{-1}g, \quad\forall g\in G.
\end{equation}
\end{lem}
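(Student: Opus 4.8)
The plan is to extract both identities directly from the cocycle-type identity \eqref{defiD} by judicious substitutions, using only left/right translations in the group $G$.

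First I would prove $\huaD(e_G)=e_G$. Setting $g=h=e_G$ in \eqref{defiD} gives
\[
\huaD(e_G)=\huaD(e_G)\,e_G\,\huaD(e_G)\,e_G^{-1}=\huaD(e_G)^2,
\]
and multiplying on the left by $\huaD(e_G)^{-1}$ yields $\huaD(e_G)=e_G$. This step needs no hypothesis beyond \eqref{defiD} and the group axioms.

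Next I would derive \eqref{defiinverse}. Fix $g\in G$ and put $h=g^{-1}$ in \eqref{defiD}:
\[
\huaD(e_G)=\huaD(gg^{-1})=\huaD(g)\,g\,\huaD(g^{-1})\,g^{-1}.
\]
By the first part the left-hand side is $e_G$, so $\huaD(g)\,g\,\huaD(g^{-1})\,g^{-1}=e_G$, hence $\huaD(g)\,g\,\huaD(g^{-1})=g$, and finally $\huaD(g^{-1})=(\huaD(g)g)^{-1}g$, which is \eqref{defiinverse}.

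There is essentially no obstacle here: both statements are formal consequences of \eqref{defiD} obtained by the substitutions $(g,h)=(e_G,e_G)$ and $(g,h)=(g,g^{-1})$ together with cancellation. The only point to be mindful of is the noncommutativity of $G$, so that the conjugation factor $g(\cdot)g^{-1}$ in \eqref{defiD} must be carried along carefully and cancellations performed on the correct side; smoothness of $\huaD$ plays no role in the argument.
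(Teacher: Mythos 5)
Your proof is correct and follows essentially the same route as the paper: both identities are obtained from \eqref{defiD} by the substitutions $(g,h)=(e_G,e_G)$ and $h=g^{-1}$ followed by cancellation. The paper merely compresses the first step into ``by \eqref{defiD}''; your version spells out the same computation.
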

\begin{proof}
By \eqref{defiD}, we have $\huaD(e_G)=e_G$. Since $e_G=\huaD(gg^{-1})=\huaD(g)g\huaD(g^{-1})g^{-1}$, it implies that $\huaD(g^{-1})=(\huaD(g)g)^{-1}g.$
\end{proof}

\begin{lem}\label{defirmkD+}
If $\huaD$ is a difference operator, then it induces a Lie group homomorphism $\huaD_{+}: G\lon G,$ defined by $\huaD_{+}(g)=\huaD(g)g$, for all $g\in G$.
\end{lem}
\begin{proof}
By $\eqref{defiD}$, we have $\huaD_{+}(gh)=\huaD(gh)gh=\huaD(g)g\huaD(h)h=\huaD_{+}(g)\huaD_{+}(h)$, Thus $\huaD_+: G\lon G$ is a Lie group homomorphism.
\end{proof}

\begin{defi}
Let $(G, \huaD)$ and $(G', \huaD')$ be two difference Lie groups. A {\bf homomorphism} from $(G, \huaD)$ to $(G', \huaD')$ consists of a Lie group homomorphism $\Psi :G\rightarrow G'$ such that
\begin{eqnarray}
\label{hom-rboG1}\huaD'\circ\Psi&=&\Psi\circ \huaD.
\end{eqnarray}
\end{defi}

\begin{defi}
A {\bf representation} of a difference Lie group $(G, \huaD)$ on a vector space $V$ with respect to a linear map $T: V\lon V$ is a Lie group homomorphism $\Theta$ from $G$ to $GL(V)$ such that
\begin{equation}\label{repofG}
T(\Theta(g)u)+\Theta(g)u=\Theta(\huaD(g)g)T(u)+\Theta(\huaD(g)g)u,\quad \forall g\in G, u\in V.
\end{equation}
\end{defi}
We denote a representation by $(V, T, \Theta)$.

\begin{ex}{\rm
For the difference Lie group $(GL_n(\mathbb R),(\cdot)^*)$ given in Example ~\ref{ex:Gln}, let $V=(\mathbb R^{n})^{\otimes n}$ and $(V,\Theta)$ be the tensor representation of $GL_n(\mathbb R)$. Let $\tilde{T}$ be the anti-symmetrization map on $V$ defined by
$$\tilde{T}(u_1\otimes\cdots\otimes u_n) =  \sum_{\epsilon\in S_n}(-1)^{|\epsilon|}u_{\epsilon(1)}\otimes\cdots\otimes u_{\epsilon(n)},\quad \forall u_1,\dots,u_n\in \mathbb R^{n}.$$
Denote $T=\tilde{T}-{\rm id}_V$. Then $(V,T,\Theta)$ is a representation of the difference Lie group $(GL_n(\mathbb R),(\cdot)^*)$. Indeed, for any $g\in GL_n(\mathbb R)$,
\begin{eqnarray*}
  \tilde{T}(\Theta(g)(u_1\otimes\cdots\otimes u_n)) &=&
  \tilde{T}(gu_1\otimes\cdots\otimes gu_n)\\
   &=&
  \sum_{\epsilon\in S_n}(-1)^{|\epsilon|}gu_{\epsilon(1)}\otimes\cdots\otimes gu_{\epsilon(n)}\\
   &=&
  \det(g)\sum_{\epsilon\in S_n}(-1)^{|\epsilon|}u_{\epsilon(1)}\otimes\cdots\otimes u_{\epsilon(n)}\\
  &=&
  \Theta(g^*g)\tilde{T}(u_1\otimes\cdots\otimes u_n).
\end{eqnarray*}
Hence, Eq.~\eqref{repofG} holds for $(V,T,\Theta)$.
}
\end{ex}

\begin{ex}{\rm
For the difference Lie group $(GL_n(\mathbb C),\overline{(\cdot)}(\cdot)^{-1})$ given in Example ~\ref{ex:Gln'}, let $V=\mathbb C^{n}$ and $(V,\Theta)$ be the vector representation of $GL_n(\mathbb C)$. Let $T$ be an $\mathbb R$-linear operator on $V$ defined by
$$T(u)=\overline{u}-u,\quad\forall u\in V.$$
Then $(V,T,\Theta)$ is a real representation of $(GL_n(\mathbb C),\overline{(\cdot)}(\cdot)^{-1})$. In fact, for any $g\in GL_n(\mathbb C)$, we have
\begin{eqnarray*}
T(\Theta(g)u)+\Theta(g)u = \overline{gu}= \Theta(\overline{g})\overline{u}= \Theta((\overline{g}g^{-1})g)(T(u)+u).
\end{eqnarray*}
Hence, Eq.~\eqref{repofG} holds for $(V,T,\Theta)$.
}
\end{ex}

Let $G$ be a Lie group and $\g$ be the corresponding Lie algebra of $G$. Since $\Ad(g)\in\Aut(G)$ for all $g\in G$ and $\Ad(g)e_G=e_G$, it follows that $\Ad(g)_{*e_G}: \g\rightarrow\g$ is an isomorphism of Lie algebras. By $\Ad(g_1 g_2)=\Ad(g_1)\Ad(g_2)$, we have $\Ad(g_1 g_2)_{*e_G}=\Ad(g_1)_{*e_G}\Ad(g_2)_{*e_G}$. Thus we obtain a Lie group homomorphism from the Lie group $G$ to $\Aut(\g)$, which is also denoted by $\Ad: G\lon \Aut(\g)$.

\begin{pro}
Let $(G, \huaD)$ be a difference Lie group. Then $(\g, D, \Ad)$ is a representation of $(G, \huaD)$, where $D=\huaD_{*e_G}$.
\end{pro}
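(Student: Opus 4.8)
The plan is to reduce everything to two facts already available: that $\Ad\colon G\to\Aut(\g)\subseteq GL(\g)$ is a (smooth) Lie group homomorphism, as observed just before the statement, and that $\huaD_{+}\colon G\to G$, $\huaD_{+}(g)=\huaD(g)g$, is a Lie group homomorphism by Lemma \ref{defirmkD+}. Since $\Ad$ is already a Lie group homomorphism into $GL(\g)$, the only thing that needs checking is the compatibility identity \eqref{repofG} for $(\g,D,\Ad)$, that is,
$$D(\Ad(g)u)+\Ad(g)u=\Ad(\huaD(g)g)D(u)+\Ad(\huaD(g)g)u,\qquad \forall\, g\in G,\ u\in\g.$$

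First I would compute the differential of $\huaD_{+}$ at the unit. Writing $\huaD_{+}=m\circ(\huaD,\Id_G)$, where $m\colon G\times G\to G$ is the multiplication and $(\huaD,\Id_G)\colon G\to G\times G$ sends $g$ to $(\huaD(g),g)$, and using $\huaD(e_G)=e_G$ together with the standard fact $m_{*(e_G,e_G)}(X,Y)=X+Y$, one gets $(\huaD_{+})_{*e_G}=D+\Id_\g$, where $D=\huaD_{*e_G}$. In particular \eqref{repofG} is equivalent to the operator identity $(D+\Id_\g)\circ\Ad(g)=\Ad(\huaD(g)g)\circ(D+\Id_\g)$ on $\g$ for every $g\in G$.

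The key step is a naturality property of $\Ad$. For any Lie group homomorphism $\phi\colon G\to G$ and any $g\in G$ one has $\phi\circ c_g=c_{\phi(g)}\circ\phi$, where $c_g(h)=ghg^{-1}$, since $\phi(ghg^{-1})=\phi(g)\phi(h)\phi(g)^{-1}$. Both sides fix $e_G$, so differentiating at $e_G$ and using $\Ad(g)=(c_g)_{*e_G}$ gives $\phi_{*e_G}\circ\Ad(g)=\Ad(\phi(g))\circ\phi_{*e_G}$. Applying this with $\phi=\huaD_{+}$ and recalling $(\huaD_{+})_{*e_G}=D+\Id_\g$ and $\huaD_{+}(g)=\huaD(g)g$, we obtain $(D+\Id_\g)\circ\Ad(g)=\Ad(\huaD(g)g)\circ(D+\Id_\g)$ for all $g\in G$; evaluating both sides on $u\in\g$ yields precisely \eqref{repofG}.

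I do not expect a real obstacle here. The only point requiring a little care is the identification $(\huaD_{+})_{*e_G}=D+\Id_\g$; it is also what makes the route through $\huaD_{+}$ preferable to differentiating the crossed-homomorphism identity \eqref{defiD} directly, since $\huaD(g)\neq e_G$ in general forces the direct computation to involve $\huaD_{*g}$ at arbitrary points $g$ rather than only at $e_G$.
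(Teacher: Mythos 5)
Your proof is correct, and it takes a genuinely different route from the paper's. The paper verifies \eqref{repofG} by brute-force differentiation: it expands $\huaD(g\exp(tx)g^{-1})$ using the crossed-homomorphism identity \eqref{defiD} together with \eqref{defiinverse}, regroups the result as a product of three curves through $e_G$, and differentiates term by term (this regrouping is exactly how it sidesteps the issue you raise about needing $\huaD_{*g}$ at points other than the identity). You instead package everything into the homomorphism $\huaD_+$ from Lemma \ref{defirmkD+}: the identities $(\huaD_+)_{*e_G}=D+\Id_\g$ and $\phi_{*e_G}\circ\Ad(g)=\Ad(\phi(g))\circ\phi_{*e_G}$ for a homomorphism $\phi$ immediately give $(D+\Id_\g)\circ\Ad(g)=\Ad(\huaD(g)g)\circ(D+\Id_\g)$, which is \eqref{repofG} for $(\g,D,\Ad)$. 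Both ingredients you use are sound and are in fact implicitly present in the paper ($(\huaD_+)_{*e_G}=D_+$ is asserted later, in the proof of the integration theorem), so your argument is a legitimate and cleaner alternative: it makes transparent that the proposition is just the naturality of $\Ad$ applied to $\huaD_+$, at the cost of hiding the explicit curve-level computation that the paper's version displays and that generalizes to the verification of \eqref{repofG} for other representations.
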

\begin{proof}
For any $g\in G$ and $x\in\g$, since $\Ad: G\lon \Aut(\g)$ is a Lie group homomorphism and $\Ad(g)x=\frac{d}{dt}\big|_{t=0}g\exp(tx)g^{-1}$, by \eqref{defiinverse}, we have
\begin{eqnarray*}
&&D(\Ad(g)x)\\
&=&\frac{d}{dt}\bigg|_{t=0}\huaD(g\exp(tx)g^{-1})\\
&=&\frac{d}{dt}\bigg|_{t=0}\huaD(g\exp(tx))g\exp(tx)\huaD(g^{-1})\exp(-tx)g^{-1}\\
&=&\frac{d}{dt}\bigg|_{t=0}\huaD(g)g\huaD(\exp(tx))\exp(tx)\huaD(g^{-1})\exp(-tx)g^{-1}\\
&=&\frac{d}{dt}\bigg|_{t=0}\Big((\huaD(g)g)\huaD(\exp(tx))(\huaD(g)g)^{-1}\Big)\Big((\huaD(g)g)\exp(tx)(\huaD(g)g)^{-1}\Big)\Big(g\exp(-tx)g^{-1}\Big)\\
&=&\frac{d}{dt}\bigg|_{t=0}(\huaD(g)g)\huaD(\exp(tx))(\huaD(g)g)^{-1}+\frac{d}{dt}\bigg|_{t=0}(\huaD(g)g)\exp(tx)(\huaD(g)g)^{-1}+\frac{d}{dt}\bigg|_{t=0}g\exp(-tx)g^{-1}\\
&=&\Ad(\huaD(g)g)(D(x))+\Ad(\huaD(g)g)x-\Ad(g)x.
\end{eqnarray*}
Thus $(\g, D, \Ad)$ is a representation of the difference Lie group $(G, \huaD)$.
\end{proof}

\begin{thm}\label{th:semidirect}
Let $(V, T, \Theta)$ be a representation of a difference Lie group $(G, \huaD)$. Then $(G\ltimes_\Theta V, \huaD_{\ltimes})$ is a difference Lie group, where $G\ltimes_\Theta V$ is the semidirect product Lie group, in which the multiplication $\cdot_\ltimes$ is given by
\begin{equation*}
(g, u)\cdot_{\ltimes}(h, v)=(gh, u+\Theta(g)v), \quad \forall g, h\in G, u, v\in V,
\end{equation*}
and $\huaD_{\ltimes}$ is given by
\begin{equation*}
\huaD_{\ltimes}(g, u)=(\huaD(g), T(u)+u-\Theta(\huaD(g))u),\quad \forall g\in G, u\in V.
\end{equation*}
\end{thm}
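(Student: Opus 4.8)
The plan is to check directly that $\huaD_\ltimes$ satisfies the defining identity~\eqref{defiD} on the semidirect product Lie group $G\ltimes_\Theta V$. Recall the ambient structure: since $\Theta\colon G\to GL(V)$ is a smooth homomorphism, $G\ltimes_\Theta V$ is a Lie group with unit $(e_G,0)$ and with inverses $(g,u)^{-1}=(g^{-1},-\Theta(g^{-1})u)$. Smoothness of $\huaD_\ltimes$ is immediate from smoothness of $\huaD$ and $\Theta$ and linearity of $T$, so the whole content of the theorem is the equality
\[
\huaD_\ltimes\bigl((g,u)\cdot_\ltimes(h,v)\bigr)=\huaD_\ltimes(g,u)\cdot_\ltimes(g,u)\cdot_\ltimes\huaD_\ltimes(h,v)\cdot_\ltimes(g,u)^{-1},\qquad \forall\, g,h\in G,\ u,v\in V.
\]

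First I would record the one-line simplification $\huaD_\ltimes(g,u)\cdot_\ltimes(g,u)=(\huaD(g)g,\,T(u)+u)=(\huaD_+(g),\,T(u)+u)$, obtained by applying the multiplication rule, upon which the terms $\pm\Theta(\huaD(g))u$ cancel. Then I would expand both sides of the displayed identity componentwise. On the $G$-component both sides equal $\huaD(g)g\huaD(h)g^{-1}=\huaD(gh)$ by~\eqref{defiD}. For the $V$-component, I would expand the right-hand side by applying the multiplication rule three times (first $\huaD_\ltimes(g,u)\cdot_\ltimes(g,u)$, then $\cdot_\ltimes\huaD_\ltimes(h,v)$, then $\cdot_\ltimes(g,u)^{-1}$), using $\Theta(ab)=\Theta(a)\Theta(b)$ and the identity $\huaD(gh)g=\huaD(g)g\huaD(h)$ (a rewriting of~\eqref{defiD}) to collapse the arguments of $\Theta$, and compare with the similarly expanded left-hand side $\huaD_\ltimes\bigl(gh,\,u+\Theta(g)v\bigr)$, where one also uses linearity of $T$. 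After cancelling the common terms $T(u)+u-\Theta(\huaD(gh))u-\Theta\bigl(\huaD(g)g\huaD(h)\bigr)v$, the identity reduces to
\[
T(\Theta(g)v)+\Theta(g)v=\Theta(\huaD(g)g)T(v)+\Theta(\huaD(g)g)v,
\]
which is exactly Eq.~\eqref{repofG} for the representation $(V,T,\Theta)$ applied to $g\in G$ and $v\in V$. Hence~\eqref{defiD} holds for $\huaD_\ltimes$ and $(G\ltimes_\Theta V,\huaD_\ltimes)$ is a difference Lie group.

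The only real difficulty is bookkeeping: keeping the noncommutative group factors in the right order through the conjugation $(g,u)\cdot_\ltimes\huaD_\ltimes(h,v)\cdot_\ltimes(g,u)^{-1}$, and collapsing the $\Theta$-arguments correctly via~\eqref{defiD}; there is no conceptual obstacle. A slightly slicker alternative route is to verify directly that the smooth map $(g,u)\mapsto(\huaD_+(g),\,T(u)+u)$ is a Lie group endomorphism of $G\ltimes_\Theta V$ — this is equivalent to $\huaD_\ltimes$ being a difference operator, by the same computation that underlies Lemma~\ref{defirmkD+}, and the homomorphism property again comes down precisely to~\eqref{repofG}.
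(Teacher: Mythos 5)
Your proposal is correct and follows essentially the same route as the paper: a direct componentwise verification of the difference operator identity for $\huaD_\ltimes$, using the simplification $\huaD_\ltimes(g,u)\cdot_\ltimes(g,u)=(\huaD(g)g,\,T(u)+u)$ and reducing the $V$-component to Eq.~\eqref{repofG}. The only cosmetic difference is that the paper multiplies both sides of the identity on the right by $(g,u)$ so as to avoid computing $(g,u)^{-1}$, whereas you carry the inverse through the conjugation; the two computations are trivially equivalent.
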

\begin{proof}
Since $(V, T, \Theta)$ is a representation of the difference Lie group $(G, \huaD)$, by \eqref{defiD} and \eqref{repofG}, for all $g, h\in G$ and $u, v\in V$, we have
\begin{eqnarray*}
&&\huaD_{\ltimes}((g, u)\cdot_\ltimes(h, v))\cdot_\ltimes(g, u)\\
&=&\huaD_{\ltimes}(gh, u+\Theta(g)v)\cdot_\ltimes(g, u)\\
&=&\Big(\huaD(gh), T(u)+T(\Theta(g)v)+u+\Theta(g)v-\Theta(\huaD(gh))u-\Theta(\huaD(gh))\Theta(g)v\Big)\cdot_\ltimes(g, u)\\
&=&\Big(\huaD(gh)g, T(u)+T(\Theta(g)v)+u+\Theta(g)v-\cancel{\Theta(\huaD(gh))u}-\Theta(\huaD(gh))\Theta(g)v+\cancel{\Theta(\huaD(gh))u}\Big)\\
&=&\Big(\huaD(g)g\huaD(h), T(u)+T(\Theta(g)v)+u+\Theta(g)v-\Theta(\huaD(g)g\huaD(h))v\Big)\\
&=&\Big(\huaD(g)g\huaD(h), T(u)+u+\Theta(\huaD(g)g)T(v)+\Theta(\huaD(g)g)v-\Theta(\huaD(g)g\huaD(h))v\Big)
\end{eqnarray*}
and
\begin{eqnarray*}
&&\huaD_{\ltimes}(g, u)\cdot_\ltimes(g, u)\cdot_\ltimes\huaD_{\ltimes}(h, v)\\
&=&(\huaD(g), T(u)+u-\Theta(\huaD(g))u)\cdot_{\ltimes}(g, u)\cdot_{\ltimes}(\huaD(h), T(v)+v-\Theta(\huaD(h))v)\\
&=&(\huaD(g)g, T(u)+u-\cancel{\Theta(\huaD(g)u)}+\cancel{\Theta(\huaD(g))u})\cdot_{\ltimes}(\huaD(h), T(v)+v-\Theta(\huaD(h))v)\\
&=&\Big(\huaD(g)g\huaD(h), T(u)+u+\Theta(\huaD(g)g)T(v)+\Theta(\huaD(g)g)v-\Theta(\huaD(g)g\huaD(h))v\Big),
\end{eqnarray*}
which implies
\begin{equation*}
\huaD_{\ltimes}((g, u)\cdot_{\ltimes}(h, v))=\huaD_{\ltimes}(g, u)\cdot_{\ltimes}(g, u)\cdot_{\ltimes}\huaD_{\ltimes}(h, v)\cdot_{\ltimes}(g, u)^{-1}.
\end{equation*}
Thus $(G\ltimes_\Theta V, \huaD_{\ltimes})$ is a difference Lie group.
\end{proof}

\subsection{Differentiation and integration of representations}
In  this subsection, we establish the relationship between representations of difference Lie groups and representations of difference Lie algebras via differentiation and integration. Recall representations of difference Lie algebras as following.

\begin{defi}
A {\bf difference operator on a Lie algebra} $\g$ is a linear map $D:\g\lon\g$ such that
\begin{equation*}
D([x,y])=[D(x), y]+[x, D(y)]+[D(x), D(y)],\quad \forall x, y\in\g.
\end{equation*}
A difference Lie algebra $(\g, D)$ is a Lie algebra $\g$ equipped with a difference operator $D:\g\lon\g$.
\end{defi}

\begin{defi}\cite{JS}
A {\bf representation of a difference Lie algebra} $(\g, D)$ on a vector space $V$ with respect to a linear map $T: V\lon V$ is a Lie algebra representation $\theta:\g\to\gl(V)$ such that
\begin{equation*}
T(\theta(x)u)=\theta(D(x))u+\theta(x)T(u)+\theta(D(x))T(u), \quad \forall x\in\g, u\in V.
\end{equation*}
\end{defi}
Denote a representation by $(V, T, \theta)$.

It was proved in \cite{GLS} that the differentiation of a difference Lie group $(G, \huaD)$ is the difference Lie algebra $(\g, D)$, where $\g$ is the Lie algebra of the Lie group $G$ and $D=\huaD_{*e_G}$. A difference Lie algebra can also be integrated to a difference Lie group \cite{JS}.

\begin{thm}\label{thm-rep-DGL}
Let $(V, T, \Theta)$ be a representation of a difference Lie group $(G, \huaD)$. Then $(V, T, \theta)$ is a representation of the difference Lie algebra $(\g, D)$, where $\theta=\Theta_{*e_G}$.
\end{thm}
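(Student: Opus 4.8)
The plan is to differentiate the defining identity \eqref{repofG} of the representation $(V,T,\Theta)$ of the difference Lie group, using the fact (already established before this theorem) that $\Theta_{*e_G}=\theta$ is a Lie algebra representation of $\g$ on $V$ and that $\huaD_{*e_G}=D$ is the associated difference operator on $\g$. The key structural observation is that Lemma \ref{defirmkD+} gives $\huaD_+(g)=\huaD(g)g$, and $(\huaD_+)_{*e_G}=\huaD_{*e_G}+\Id_\g=D+\Id$; this is the group-level analogue of the shift by $\Id$ that appears in both the group and algebra compatibility conditions, so it is exactly what will make the computation close up.

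First I would fix $u\in V$ and regard both sides of \eqref{repofG} as smooth maps $G\to V$, then apply $\frac{d}{dt}\big|_{t=0}$ along a curve $g=\exp(tx)$ for $x\in\g$. On the left-hand side $T(\Theta(g)u)+\Theta(g)u$ differentiates to $T(\theta(x)u)+\theta(x)u$, since $\frac{d}{dt}\big|_{t=0}\Theta(\exp(tx))u=\theta(x)u$ and $T$ is linear (hence commutes with the derivative). On the right-hand side $\Theta(\huaD(g)g)T(u)+\Theta(\huaD(g)g)u=\Theta(\huaD_+(g))(T(u)+u)$, and since $\huaD_+$ is a Lie group homomorphism with $\huaD_+(e_G)=e_G$, the chain rule gives $\frac{d}{dt}\big|_{t=0}\Theta(\huaD_+(\exp(tx)))(T(u)+u)=\theta\big((\huaD_+)_{*e_G}x\big)(T(u)+u)=\theta(D(x)+x)(T(u)+u)=\theta(D(x))T(u)+\theta(D(x))u+\theta(x)T(u)+\theta(x)u$. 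Equating the two derivatives and cancelling the common term $\theta(x)u$ yields exactly
\[
T(\theta(x)u)=\theta(D(x))u+\theta(x)T(u)+\theta(D(x))T(u),
\]
which is the defining identity of a representation of the difference Lie algebra $(\g,D)$. Since $\theta=\Theta_{*e_G}$ is already a Lie algebra representation of $\g$, this completes the argument.

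I do not expect a serious obstacle here; the only points requiring care are the standard ones: justifying that differentiation passes through the linear map $T$ (immediate by linearity and continuity of $T$ on the finite-dimensional $V$, or by choosing a basis), and correctly computing $(\huaD_+)_{*e_G}=D+\Id$ from $\huaD_+(g)=\huaD(g)g$ via the product rule for the multiplication map $G\times G\to G$ at $(e_G,e_G)$. If one prefers to avoid invoking $\huaD_+$, one can instead differentiate $\Theta(\huaD(g)g)$ directly using $(gh)_{*(e_G,e_G)}(x,y)=x+y$, which gives the same $D(x)+x$; either route is routine. The whole proof is a one-paragraph computation once the identification $(\huaD_+)_{*e_G}=D+\Id$ is in hand.
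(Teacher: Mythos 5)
Your proposal is correct and follows essentially the same route as the paper: both differentiate the defining identity \eqref{repofG} along $g=\exp(tx)$ at $t=0$ and use that the tangent of $g\mapsto\huaD(g)g$ at the identity is $D+\Id$ (the paper realizes this via the product rule on $\Theta(\huaD(g))\Theta(g)$, you via $(\huaD_+)_{*e_G}=D+\Id$, which is the same computation). The only cosmetic difference is that the paper first isolates $T(\Theta(g)u)$ before differentiating, whereas you differentiate both sides and cancel $\theta(x)u$ at the end.
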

\begin{proof}
Since $T: V\lon V$ is a linear map, we have $T_{*}=T$. Denote the exponential map of the Lie group $G$ by $\exp$. For any $x\in\g, u\in V$,  by \eqref{repofG}, we have
\begin{eqnarray*}
T(\theta(x)u)&=&\frac{d}{dt}\bigg|_{t=0}T(\Theta(\exp(tx))u)\\
&=&\frac{d}{dt}\bigg|_{t=0}\Big(\Theta(\huaD(\exp(tx))\exp(tx))T(u)+\Theta(\huaD(\exp(tx))\exp(tx))u-\Theta(\exp(tx))u\Big)\\
&=&\frac{d}{dt}\bigg|_{t=0}\Theta(\huaD(\exp(tx)))T(u)+\frac{d}{dt}\bigg|_{t=0}\Theta(\exp(tx))T(u)+\frac{d}{dt}\bigg|_{t=0}\Theta(\huaD(\exp(tx)))u\\
&=&\theta(D(x))T(u)+\theta(x)T(u)+\theta(D(x))u.
\end{eqnarray*}
Thus $(V, T, \theta)$ is a representation of the difference Lie algebra $(\g, D)$.
\end{proof}

Let $(\g, D)$ be a difference Lie algebra and $(V, T, \theta)$ be a representation. We denote the integration of $(\g, D)$ by $(G, \huaD)$, where $G$ is a connected and simply connected Lie group. See \cite{JS} for explicit construction of $\huaD$. Let $\Theta: G\lon GL(V)$ be the Lie group homomorphism integrating the Lie algebra homomorphism $\theta: \g\lon\gl(V)$. Then we have the following theorem.

\begin{thm}
With the above notations, if $(V, T, \theta)$ is a representation of a difference Lie algebra $(\g, D)$, then $(V, T, \Theta)$ is a representation of the integrated difference Lie group $(G, \huaD)$.
\end{thm}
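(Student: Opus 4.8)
The plan is to prove the identity \eqref{repofG} for $(V,T,\Theta)$ by a differentiation-and-reconnection argument, exploiting that $G$ is connected and simply connected. Concretely, I would fix $u\in V$ and consider the two smooth maps $F_1,F_2\colon G\to V$ defined by
\[
F_1(g)=T(\Theta(g)u)+\Theta(g)u,\qquad F_2(g)=\Theta(\huaD(g)g)T(u)+\Theta(\huaD(g)g)u .
\]
I want to show $F_1=F_2$. Both agree at $g=e_G$ since $\huaD(e_G)=e_G$, so it suffices to show that they satisfy the same first-order ODE along every one-parameter subgroup, or more precisely that the ``multiplicative derivative'' of each coincides. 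The natural route is to differentiate the desired relation and check it reduces to the infinitesimal statement, which is exactly the content of the representation axiom for $(V,T,\theta)$ that we are handed by hypothesis (and which holds because $\theta=\Theta_{*e_G}$, $D=\huaD_{*e_G}$).

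The key steps, in order, would be: (1) Record that $\huaD_+(g)=\huaD(g)g$ defines a Lie group homomorphism $\huaD_+\colon G\to G$ by Lemma \ref{defirmkD+}, and that its differential at $e_G$ is $D+\mathrm{id}_\g$; hence $\Theta\circ\huaD_+\colon G\to GL(V)$ is a Lie group homomorphism integrating the Lie algebra homomorphism $x\mapsto\theta(Dx)+\theta(x)$. (2) Rewrite the target identity \eqref{repofG} as
\[
(T+\mathrm{id}_V)\circ\Theta(g)=\Theta(\huaD_+(g))\circ(T+\mathrm{id}_V)\quad\text{on }V,\qquad\forall g\in G,
\]
i.e.\ as the statement that the linear map $S\defbe T+\mathrm{id}_V\in\End(V)$ intertwines the representation $\Theta$ with the representation $\Theta\circ\huaD_+$. (3) Define $\Phi\colon G\to\Hom(V,V)$ by $\Phi(g)=\Theta(\huaD_+(g))^{-1}\,S\,\Theta(g)$ and show $\Phi$ is constant equal to $S$: it is smooth, $\Phi(e_G)=S$, and a direct computation of $\frac{d}{dt}\big|_{t=0}\Phi(g\exp(tx))$ using the Leibniz rule gives
\[
\Theta(\huaD_+(g))^{-1}\Big(-\big(\theta(Dx)+\theta(x)\big)S+S\,\theta(x)\Big)\Theta(g),
\]
and the bracketed expression vanishes precisely because $S\theta(x)=\theta(x)T+\theta(x)+\theta(Dx)T+\theta(Dx)=T\theta(x)+\theta(x)+\theta(Dx)$ by the difference Lie algebra representation axiom — that is, $S\theta(x)-\theta(x)S = T\theta(x)-\theta(x)T = \theta(Dx)+\theta(Dx)T = \theta(Dx)S$, so $-( \theta(Dx)+\theta(x))S+S\theta(x)=0$. (4) Since $G$ is connected, a smooth map whose left-invariant derivative along every direction vanishes is constant, so $\Phi\equiv S$, which is exactly \eqref{repofG}. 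Finally, since $\Theta$ is a Lie group homomorphism into $GL(V)$, $(V,\Theta)$ is an honest representation of $G$, and we only needed to verify the extra compatibility with $T$; moreover $\huaD$ is the integration of $D$ as in \cite{JS}, so the relevant differentials $\huaD_{*e_G}=D$ and $\Theta_{*e_G}=\theta$ are as claimed.

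The main obstacle I anticipate is bookkeeping the differential of $\huaD_+$ and making sure the infinitesimal identity obtained in step (3) is precisely the representation axiom for $(V,T,\theta)$ and not a sign- or order-twisted variant; in particular one must be careful that ``integration'' here means $\huaD$ is defined so that $\huaD_{*e_G}=D$, which is the construction of \cite{JS}, and that $\Theta\circ\huaD_+$ integrates $\theta\circ(D+\mathrm{id})$ — this uses functoriality of integration for connected simply connected $G$ together with Lemma \ref{defirmkD+}. Once the infinitesimal matching is pinned down, connectedness of $G$ upgrades it to the global statement, and simple-connectedness is what guarantees $\Theta$ itself exists as integrating $\theta$; no further analytic input is needed.
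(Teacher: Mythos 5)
Your strategy is sound and genuinely different in mechanism from the paper's, but step (3) contains a concrete order-of-multiplication error that, as literally written, makes the key vanishing claim fail; fortunately it is repairable in one line. With $S=T+\Id$ and $\Phi(g)=\Theta(\huaD_+(g))^{-1}S\,\Theta(g)$, one has
\[
\Phi(g\exp(tx))=\Theta\bigl(\huaD_+(\exp(tx))\bigr)^{-1}\,\Phi(g)\,\Theta(\exp(tx)),
\qquad
\frac{d}{dt}\Big|_{t=0}\Phi(g\exp(tx))=-\theta(D_+(x))\Phi(g)+\Phi(g)\theta(x),
\]
and \emph{not} the conjugated expression $\Theta(\huaD_+(g))^{-1}\bigl(-\theta(D_+(x))S+S\theta(x)\bigr)\Theta(g)$ that you display: the factors $\theta(D_+(x))$ and $\theta(x)$ sit on the outside and cannot be moved past $\Theta(\huaD_+(g))^{-1}$ and $\Theta(g)$. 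The correct derivative vanishes at $g$ only if $\theta(D_+(x))\Phi(g)=\Phi(g)\theta(x)$, which is essentially the statement $\Phi(g)=S$ that you are trying to prove, so ``the derivative vanishes everywhere, hence $\Phi$ is constant'' is circular at a general point. Any of the following repairs works: (i) put the increment on the left, since $\frac{d}{dt}\big|_{t=0}\Phi(\exp(tx)g)=\Theta(\huaD_+(g))^{-1}\bigl(-\theta(D_+(x))S+S\theta(x)\bigr)\Theta(g)$ \emph{is} your formula and vanishes by the infinitesimal identity $S\theta(x)=\theta(D_+(x))S$, and right-invariant vector fields also span every tangent space of the connected group $G$; (ii) use instead $\widetilde\Phi(g)=\Theta(\huaD_+(g))\,S\,\Theta(g)^{-1}$, whose derivative along $g\exp(tx)$ is $\Theta(\huaD_+(g))\bigl(\theta(D_+(x))S-S\theta(x)\bigr)\Theta(g)^{-1}=0$; or (iii) keep your $\Phi$ and note that $t\mapsto\Phi(g\exp(tx))$ solves the linear ODE $\Psi'=-\theta(D_+(x))\Psi+\Psi\,\theta(x)$ of which the constant $S$ is also a solution, so uniqueness propagates $\Phi\equiv S$ from $e_G$ along finite products of exponentials. (There is also a harmless typo in your verification of the infinitesimal identity, ``$=T\theta(x)+\theta(x)+\theta(Dx)$'' should be $T\theta(x)+\theta(x)$, but your final conclusion $S\theta(x)=\theta(D_+(x))S$ is correct.)

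Once repaired, your argument is a legitimate alternative to the paper's. The paper proves the same reformulated identity $(T+\Id)\circ\Theta(g)=\Theta(\huaD_+(g))\circ(T+\Id)$, but rather than an ODE or vanishing-derivative argument it encodes the intertwining condition as membership in the Lie subgroup $\widetilde{GL}(V,T)\subset GL(V)\times GL(V)$ with Lie algebra $\widetilde{\gl}(V,T)$, observes that the representation axiom says $\Lambda(x,D_+(x))=(\theta(x),\theta(D(x))+\theta(x))$ lands in $\widetilde{\gl}(V,T)$, integrates $\Lambda$ over the connected, simply connected graph $\mathrm{Gr}(\huaD_+)\subset G\times G$, and uses connectedness to identify the integrated homomorphism with $(\Theta,\Theta\circ\huaD_+)$ on all of $\mathrm{Gr}(\huaD_+)$. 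Both proofs rest on exactly the same two inputs, namely the infinitesimal identity and the connectedness and simple connectedness of $G$, and both route everything through $\huaD_+$ and Lemma \ref{defirmkD+}; yours trades the graph-subgroup integration formalism for a uniqueness-of-flow argument, which is arguably more elementary and self-contained, while the paper's version makes the functoriality of Lie integration do the propagation for it.
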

\begin{proof}
Since $\Theta$ is already a Lie group homomorphism, we only need to show that  \eqref{repofG} holds.

Define $D_+:\g\to\g$ by
$$
D_+(x)=x+D(x).
$$
Then $D_+$ is a Lie algebra homomorphism, and the graph of $D_+$, which is denoted by $
\mathrm{Gr}(D_+)=\{(x, D(x)+x)\,|\,\forall x\in\g\}
$
is a Lie subalgebra of the direct sum Lie algebra $\g\oplus\g$. Denote by $$
\widetilde{\gl}(V, T)=\{(\phi_1, \phi_2)\,|\,\phi_1, \phi_2\in\gl(V), \text{and}~~(T+\Id)\circ\phi_1=\phi_2\circ(T+\Id)\},
$$
which is a Lie subalgebra of the direct sum Lie algebra $\gl(V)\oplus \gl(V)$. Define $\Lambda:\mathrm{Gr}(D_+)\to \gl(V)\oplus \gl(V)$   by
$$\Lambda(x, D(x)+x)=(\theta(x), \theta(D(x))+\theta(x)), \quad\forall x\in\g.$$ Since $(V, T, \theta)$ is a representation of the difference Lie algebra $(\g, D)$, it follows that
$(\theta(x), \theta(D(x))+\theta(x))\in \widetilde{\gl}(V, T)$, and $\Lambda $ is a Lie algebra homomorphism from $\mathrm{Gr}(D_+)$ to $\widetilde{\gl}(V, T)$.

By Lemma \ref{defirmkD+}, $\huaD_+$ is a Lie group homomorphism. So the graph of  $\huaD_+$, which is denoted by $
\mathrm{Gr}(\huaD_+)=\{(g, \huaD(g)g)\,|\, \forall g\in G\},$ is a connected and simply connected Lie subgroup of the direct product Lie group $G\times G$. It is straightforward to see that the tangent map of $\huaD_+$ at the identity is exactly $D_+$, and the  Lie algebra of $\mathrm{Gr}(\huaD_+)$ is $\mathrm{Gr}(D_+)$. This can be summarized by the following commutative diagram:
 $$\xymatrix{
			G \ar[r]^(0.45){\huaD_+} &G\\
			\g \ar[u]^{\exp}\ar[r]_{D_+}& \g.\ar[u]^{\exp}
		}$$
Denote   by
\begin{equation*}
\widetilde{GL}(V, T)=\{(\Phi_1, \Phi_2)\,|\,\Phi_1, \Phi_2\in GL(V), \text{and}~~ (T+\Id)\circ\Phi_1=\Phi_2\circ(T+\Id)\},
\end{equation*}
which  is  obviously a Lie subgroup of the direct product Lie group $GL(V)\times GL(V)$, whose Lie algebra is $\widetilde{\gl}(V, T)$.

Let   $\Xi: \mathrm{Gr}(\huaD_+)\lon \widetilde{GL}(V, T)$ be the integration of the Lie algebra homomorphism $\Lambda:\mathrm{Gr}(D_+)\lon \widetilde{\gl}(V, T)$. Then we have
\begin{eqnarray*}
\Xi(\exp(x), \exp(D(x)+x))&=&(\EXP(\theta(x)), \EXP(\theta(D(x))+\theta(x)))\\
&=&(\Theta(\exp(x)), \Theta(\exp(D(x)+x))),
\end{eqnarray*}
where $\EXP$ is the exponential map from $\gl(V)$ to $GL(V)$.
Since $\exp(D(x)+x)=\huaD(\exp(x))\exp(x)$, we have
$$
\Xi(\exp(x), \huaD(\exp(x))\exp(x))=(\Theta(\exp(x)), \Theta(\huaD(\exp(x))\exp(x))).
$$
Since $\mathrm{Gr}(\huaD_+)$ is diffeomorphic to $G$ and $G$ is a connected Lie group, any $g\in G$ can be written as products of elements near the identity. Thus it follows that
$$
\Xi(g, \huaD(g)g)=(\Theta(g), \Theta(\huaD(g)g))\in \widetilde{GL}(V, T).
$$
Therefore, we have
$$
(T+\Id)\circ \Theta(g)= \Theta(\huaD(g)g))\circ(T+\Id),
$$
which implies that \eqref{repofG} holds, and
  $\Theta$ is a representation of $(G, \huaD)$ on $V$ with respect to $T$.
\end{proof}

\section{Cohomologies  of difference   Lie groups}\label{sec:coh}

In this section, we introduce cohomology theories for difference operators on Lie groups as well as difference Lie groups. The relation between various cohomologies are given by a long exact sequence.

 First we  recall the normalized cohomology of a Lie group $G$ with coefficients in a representation
 $\Theta:G\to GL(V)$ (see e.g. \cite{EM}). A smooth map $\alpha_n: \underbrace{G\times\cdots\times G}_n\lon V$ is called an {\bf $n$-normalized cochain} if $\alpha_n(g_1, \cdots, g_n)=0$ when any one of elements $g_i=e_G$. Denote the space of $n$-normalized cochains by $C^{n}(G, V)$, which is an abelian group. The coboundary operator $\dM^\Theta: C^{n}(G, V)\rightarrow C^{n+1}(G, V)$ is defined by
\begin{eqnarray*}
\dM^\Theta(\alpha_n)(g_1,\cdots,g_n,g_{n+1})&=&\Theta(g_1)\alpha_n(g_2, \cdots, g_n, g_{n+1})\\
&&+\sum_{i=1}^{n}(-1)^{i}\alpha_n(g_1,\cdots,g_{i-1}, g_ig_{i+1}, g_{i+2}, \cdots g_{n+1})\\
&&+(-1)^{n+1}\alpha_{n}(g_1,\cdots,g_n).
\end{eqnarray*}
The corresponding $n$-th cohomology group is denoted by $\huaH^{n}(G, V)$.

\begin{rmk}
It was proved in \cite{EM}  that the   normalized cohomology of a Lie group $G$ with coefficients in a representation $\Theta:G\to GL(V)$ is isomorphic to   the usual cohomology.
\end{rmk}

\subsection{Cohomologies  of difference operators on  Lie groups}
In this subsection, we define a cohomology theory for difference operators on Lie groups.

\begin{thm}\label{defiarepD}
 Let $(V, T, \Theta)$ be a representation of a difference Lie group $(G, \huaD)$. Define $\Theta_\huaD:G\to GL(V)$ by
 \begin{equation}\label{defiarepD1}
 \Theta_\huaD(g)u=\Theta(\huaD(g)g)u,\quad \forall g\in G, u\in V.
 \end{equation}
 Then $\Theta_\huaD$ is a representation of $G$ on $V$.
\end{thm}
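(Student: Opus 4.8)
The plan is to verify directly that $\Theta_\huaD$ is a Lie group homomorphism from $G$ to $GL(V)$; smoothness is automatic since $\Theta$, $\huaD$ and the group operations are smooth. The only real content is the multiplicativity $\Theta_\huaD(gh) = \Theta_\huaD(g)\Theta_\huaD(h)$, and this should follow by combining the definition \eqref{defiarepD1} with the defining identity \eqref{defiD} of the difference operator together with the fact that $\Theta$ itself is a homomorphism.

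Concretely, I would compute $\Theta_\huaD(gh)u = \Theta(\huaD(gh)(gh))u$ and rewrite the argument using \eqref{defiD}: $\huaD(gh)gh = \huaD(g)\,g\,\huaD(h)\,g^{-1}\,g\,h = \huaD(g)\,g\,\huaD(h)\,h = \huaD_{+}(g)\,\huaD_{+}(h)$, where $\huaD_{+}(g)=\huaD(g)g$ is the Lie group homomorphism of Lemma \ref{defirmkD+}. Then, since $\Theta$ is a Lie group homomorphism,
\[
\Theta_\huaD(gh)u = \Theta\big(\huaD_+(g)\huaD_+(h)\big)u = \Theta(\huaD_+(g))\Theta(\huaD_+(h))u = \Theta_\huaD(g)\big(\Theta_\huaD(h)u\big).
\]
In fact this shows $\Theta_\huaD = \Theta\circ\huaD_+$, the composition of the Lie group homomorphism $\huaD_+$ with the representation $\Theta$, so $\Theta_\huaD$ is manifestly a representation of $G$ on $V$; one also gets $\Theta_\huaD(e_G)=\Id$ for free from $\huaD(e_G)=e_G$.

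There is essentially no obstacle here — the statement is a formal consequence of \eqref{defiD} and Lemma \ref{defirmkD+}, and note that the hypotheses involving $T$ and \eqref{repofG} are not even needed for this particular claim (they only matter because $(V,T,\Theta)$ being a representation of the difference Lie group is the ambient setting in which $\Theta_\huaD$ will later be used). The one point worth stating cleanly is the cancellation $g^{-1}g = e_G$ inside the argument of $\huaD(gh)gh$, which is what turns the conjugated expression in \eqref{defiD} into the product $\huaD_+(g)\huaD_+(h)$; everything else is immediate.
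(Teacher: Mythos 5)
Your proposal is correct and matches the paper's proof essentially verbatim: both compute $\Theta_\huaD(gh)u=\Theta(\huaD(gh)gh)u=\Theta(\huaD(g)g\huaD(h)h)u=\Theta(\huaD(g)g)\Theta(\huaD(h)h)u$ using \eqref{defiD} and the multiplicativity of $\Theta$. Your additional observation that $\Theta_\huaD=\Theta\circ\huaD_+$ (so the claim is just Lemma \ref{defirmkD+} plus composition) is a nice, if implicit, restatement of the same argument.
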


\begin{proof}
  For all $g, h\in G, u\in V$, by the fact that $\Theta$ is a Lie group homomorphism and \eqref{defiD}, we have
\begin{eqnarray*}
\Theta_\huaD(gh)u=\Theta(\huaD(gh)gh)u=\Theta(\huaD(g)g\huaD(h)h)u=\Theta(\huaD(g)g)\Theta(\huaD(h)h)u=\Theta_\huaD(g)\Theta_\huaD(h)u,
\end{eqnarray*}
which implies that $\Theta_\huaD$ is a representation of $G$ on $V$.
\end{proof}

Now we are ready to define a cohomology theory for difference operators on Lie groups. Let $(V, T, \Theta)$ be a representation of a difference Lie group $(G, \huaD)$. Define the space of $1$-normalized cochains $\frkC^{1}(\huaD, T)$ by $0$. For $n\geq2$, define the space of $n$-normalized cochains $\frkC^n(\huaD, T)$ by $C^{n-1}(G, V)$.

\begin{defi}
The cohomology of the cochain complex $(\oplus_{n=1}^{\infty}\frkC^{n}( \huaD, T), \dTD)$ is called {\bf the cohomology of the difference operator} $\huaD$ with coefficients in the representation $(V, T, \Theta)$, where $\dTD$ is the coboundary operator for the Lie group $G$ with coefficients in the representation $(V; \Theta_\huaD)$. The corresponding $n$-th cohomology group is denoted by $\huaH^{n}(\huaD, T)$.
\end{defi}

\subsection{Cohomologies  of difference    Lie groups}

Let $(V, T, \Theta)$ be a representation of a difference Lie group $(G, \huaD)$. Define the space of $1$-cochains $C^{1}(G, \huaD, V, T)$ to be $C^1(G, V)$. For $n\geq 2$, we define the space of $n$-cochains $C^{n}(G, \huaD, V, T)$ by
\begin{equation*}
C^{n}(G, \huaD, V, T)=C^n(G, V)\oplus \frkC^{n}(\huaD, T).
\end{equation*}
Define the coboundary operator
\begin{equation*}
\delta: C^{n}(G, \huaD, V, T)\lon C^{n+1}(G, \huaD, V, T)
\end{equation*}
by
\begin{equation}\label{defidgco}
\delta(\alpha_n, \beta_{n-1})=(\dM^{\Theta}\alpha_n, \dTD(\beta_{n-1})+\frkK(\alpha_n) ),
\end{equation}
where $\dM^{\Theta}$ and $\dTD$ are the coboundary operators of the Lie group $G$ with coefficients in the representation $(V,\Theta)$ and $(V,\Theta_\huaD)$ respectively, and
  $\frkK: C^{n}(G, V)\lon \frkC^{n+1}(\huaD, T)$ is defined by $\frkK(\alpha_n)=\pk(\alpha_n)+\hk(\alpha_n)$, where
\begin{equation}\label{defisalpha}
\pk(\alpha_n)(g_1, \cdots, g_n)=
\begin{cases}
-\Theta(\huaD(g_1))\alpha_1(g_1)+\alpha_1(\huaD(g_1)g_1)-\alpha_1(\huaD(g_1)), & n=1;\\
\alpha_2(\huaD(g_1), g_1)-\alpha_2(\huaD(g_1g_2), g_1g_2)+\Theta(\huaD(g_1)g_1)\alpha_2(\huaD(g_2), g_2), & n=2;\\
0, & n\geq 3,
\end{cases}
\end{equation}
and
\begin{eqnarray}\label{defi-coht}
&&\hk(\alpha_n)(g_1,\cdots,g_n)\\
\nonumber&=&(-1)^{n}\Big(\alpha_n(\huaD(g_1)g_1,\cdots,\huaD(g_n)g_n)-T(\alpha_n(g_1,\cdots,g_n))-\alpha_n(g_1,\cdots,g_n)\Big).
\end{eqnarray}
Since $\alpha_n$ is an $n$-normalized cochain, it follows that $\frkK(\alpha_n)\in\frkC^{n+1}(\huaD, T)$.

\begin{thm}\label{th:coboundary}
With the above notations, $(\oplus_{n=1}^{\infty}C^{n}(G, \huaD, V, T), \delta)$ is a cochain complex, i.e.
\begin{equation*}
\delta\circ\delta=0.
\end{equation*}
\end{thm}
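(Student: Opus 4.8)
The plan is to verify $\delta\circ\delta=0$ by expanding the composite on a cochain $(\alpha_n,\beta_{n-1})\in C^n(G,\huaD,V,T)$ componentwise, using the block-triangular structure of $\delta$ in \eqref{defidgco}. Applying $\delta$ twice gives
\[
\delta\big(\delta(\alpha_n,\beta_{n-1})\big)=\Big(\dM^\Theta\dM^\Theta\alpha_n,\ \dTD\dTD\beta_{n-1}+\dTD\frkK(\alpha_n)+\frkK(\dM^\Theta\alpha_n)\Big),
\]
so the first component vanishes because $\dM^\Theta$ is the coboundary operator of the Lie group $G$ with coefficients in $(V,\Theta)$, and $\dTD\dTD\beta_{n-1}=0$ because $\dTD$ is the coboundary operator of $G$ with coefficients in the representation $(V,\Theta_\huaD)$ (which is a genuine $G$-representation by Theorem \ref{defiarepD}). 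Hence the whole problem reduces to the single identity
\[
\dTD\circ\frkK+\frkK\circ\dM^\Theta=0\colon\quad C^n(G,V)\lon \frkC^{n+2}(\huaD,T).
\]

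Next I would split $\frkK=\pk+\hk$ and treat the two summands as independently as possible, checking $\dTD\hk+\hk\dM^\Theta=0$ and $\dTD\pk+\pk\dM^\Theta=0$ (the cross terms should not interfere because, by \eqref{defisalpha}, $\pk$ is supported in degrees $n=1,2$ and contributes nothing for $n\geq 3$, while $\hk$ is defined uniformly; one should be slightly careful in the low degrees $n=1,2$ where both are present). For the $\hk$-part, the key observation is that $\hk$ is, up to the sign $(-1)^n$, the difference of two pullback-type operations: precomposition of $\alpha_n$ with the map $(g_1,\dots,g_n)\mapsto(\huaD(g_1)g_1,\dots,\huaD(g_n)g_n)$ and the chain map $\alpha_n\mapsto T\circ\alpha_n+\alpha_n$. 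Since $\huaD_+(g)=\huaD(g)g$ is a Lie group homomorphism (Lemma \ref{defirmkD+}), the assignment $\alpha_n\mapsto \alpha_n\circ(\huaD_+\times\cdots\times\huaD_+)$ is a chain map from the complex computing $\huaH^\ast(G,V)$ with coefficients $\Theta_\huaD$ to itself — more precisely it intertwines $\dM^{\Theta_\huaD}$ with $\dM^\Theta$ after identifying $\Theta(\huaD_+(g))=\Theta_\huaD(g)$ — and the operator $T\circ(-)+(-)$ is a chain map because $(V,T,\Theta)$ is a representation of the difference Lie group, i.e. \eqref{repofG} holds, which is exactly the compatibility $(T+\Id)\circ\Theta(g)=\Theta(\huaD(g)g)\circ(T+\Id)$. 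So each piece of $\hk$ intertwines $\dM^\Theta$ with $\dTD$ up to the degree sign, and the signs $(-1)^n$ versus $(-1)^{n+1}$ account for the relative minus that produces cancellation.

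For the $\pk$-part, since the only nonzero outputs are in degree $n+1\in\{2,3\}$, I would simply compute $\dTD\pk(\alpha_1)+\pk(\dM^\Theta\alpha_1)$ as an element of $\frkC^3(\huaD,T)=C^2(G,V)$ and $\dTD\pk(\alpha_2)+\pk(\dM^\Theta\alpha_2)$ as an element of $\frkC^4(\huaD,T)=C^3(G,V)$, expanding everything via the explicit formulas \eqref{defisalpha}, the definition of $\dTD$ (the Lie group coboundary with coefficients $\Theta_\huaD$, so $\Theta_\huaD(g)=\Theta(\huaD(g)g)$), and $\dM^\Theta$; here one uses the cocycle-type relations coming from $\huaD(gh)=\huaD(g)g\huaD(h)g^{-1}$ and $\Theta$ being a homomorphism to see the terms telescope. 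I would also need to check that the cross terms $\dTD\hk(\alpha_1)+\pk(\dM^\Theta\alpha_1)$-type mixings in low degree still cancel; organizing the degrees $n=1,2$ carefully is where bookkeeping errors are likeliest.

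The main obstacle I anticipate is precisely this low-degree bookkeeping for $\pk$: matching the three-term expression in \eqref{defisalpha} against the full alternating sum defining $\dTD$ and $\dM^\Theta$ in degrees $2$ and $3$, keeping the $\Theta$ versus $\Theta_\huaD$ coefficients straight, and confirming the signs. The conceptual part — that $\hk$ is a difference of chain maps and hence commutes with the coboundary up to sign — is clean once one notes that Lemma \ref{defirmkD+} makes $\huaD_+$ a homomorphism and \eqref{repofG} makes $T+\Id$ intertwine $\Theta$ with $\Theta_\huaD$; the rest is a finite, if somewhat tedious, verification that I would carry out degree by degree.
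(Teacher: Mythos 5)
Your proposal is correct and shares the paper's overall skeleton --- the block-triangular reduction to $\dTD\circ\frkK+\frkK\circ\dM^{\Theta}=0$, the splitting $\frkK=\pk+\hk$, and an explicit low-degree verification for $\pk$ --- but your treatment of the $\hk$-part takes a genuinely different and cleaner route. The paper proves $\dTD\circ\hk+\hk\circ\dM^{\Theta}=0$ by a page-long direct expansion of $\dTD(\hk(\alpha_n))(g_1,\dots,g_{n+1})$, regrouping terms until $-\hk(\dM^{\Theta}\alpha_n)$ emerges; you instead write $\hk=(-1)^n(P-Q)$ in degree $n$, where $P(\alpha_n)=\alpha_n\circ(\huaD_+\times\cdots\times\huaD_+)$ and $Q(\alpha_n)=(T+\Id)\circ\alpha_n$, and note that $P$ intertwines $\dM^{\Theta}$ with $\dTD$ because $\huaD_+$ is a homomorphism with $\Theta\circ\huaD_+=\Theta_\huaD$ (Lemma \ref{defirmkD+}), while $Q$ does so because \eqref{repofG} is precisely $(T+\Id)\circ\Theta(g)=\Theta_\huaD(g)\circ(T+\Id)$; the alternation of the prefactor $(-1)^n$ versus $(-1)^{n+1}$ then gives the anticommutation in one line. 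This isolates the two structural inputs that the paper's computation uses only implicitly, and it is a correct argument. Two small clarifications: the ``cross terms'' you worry about do not exist, since by linearity the identity splits into the separate identities for $\pk$ and $\hk$, each of which holds on its own; and in degree $2$ one has $\pk(\dM^{\Theta}\alpha_2)=0$ automatically because $\pk$ vanishes on $3$-cochains, so there only $\dTD(\pk(\alpha_2))=0$ needs checking. The degree-$1$ and degree-$2$ telescoping computations you defer are exactly the ones the paper carries out, and they close up as you predict.
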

\begin{proof}
For any $(\alpha_n, \beta_{n-1})\in C^{n}(G, \huaD, V, T)$, by \eqref{defidgco}, we have
$$
\delta\circ\delta(\alpha_n, \beta_{n-1})=\Big(\dM^{\Theta}(\dM^{\Theta}\alpha_n), \dTD\Big(\dTD(\beta_{n-1})+\frkK(\alpha_{n})\Big)+\frkK(\dM^{\Theta}\alpha_{n})\Big).
$$
Since $\dM^{\Theta}$ and $\dTD$ are the coboundary operators of the Lie group $G$ with coefficients in the representations $(V,\Theta)$ and $(V,\Theta_\huaD)$ respectively, we get  $\dM^{\Theta}\circ\dM^{\Theta}=0$ and $\dTD\circ \dTD=0$.
Thus we only need to prove
$$\dTD\circ\frkK+\frkK\circ\dM^{\Theta}=0.$$

For $n\geq1$ and $g_1,\cdots,g_{n+1}\in G$, by \eqref{defiD}, \eqref{repofG} and \eqref{defi-coht}, we have
\begin{eqnarray*}
&&\dTD(\hk(\alpha_n))(g_1,\cdots, g_{n+1})\\
&=&\Theta(\huaD(g_1)g_1)\hk(\alpha_n)(g_2, \cdots, g_{n+1})\\
&&+\sum_{i=1}^{n}(-1)^i\hk(\alpha_n)(g_1,\cdots,g_ig_{i+1},\cdots,g_{n+1})+(-1)^{n+1}\hk(\alpha_{n})(g_1,\cdots,g_{n})\\
&=&(-1)^{n}\Theta(\huaD(g_1)g_1)\alpha_n(\huaD(g_2)g_2, \cdots, \huaD(g_{n+1})g_{n+1})\\
&&-(-1)^{n}\Theta(\huaD(g_1)g_1)T(\alpha_n(g_2, \cdots, g_{n+1}))-(-1)^{n}\Theta(\huaD(g_1)g_1)\alpha_{n}(g_2, \cdots, g_{n+1})\\
&&+\sum_{i=1}^{n}(-1)^{i+n}\Big(\alpha_{n}(\huaD(g_1)g_1, \cdots, \huaD(g_ig_{i+1})g_{i}g_{i+1},\cdots,\huaD(g_{n+1})g_{n+1})\\
&&-T(\alpha_{n}(g_1,\cdots,g_ig_{i+1},\cdots,g_{n+1}))-\alpha_n(g_1,\cdots,g_ig_{i+1},\cdots,g_{n+1})\Big)\\
&&-\alpha_n(\huaD(g_1)g_1,\cdots,\huaD(g_n)g_n)+T(\alpha_n(g_1,\cdots,g_n))+\alpha_n(g_1,\cdots,g_n)\\
&=&-\Big((-1)^{n+1}\Theta(\huaD(g_1)g_1)\alpha_{n}(\huaD(g_2)g_2,\cdots,\huaD(g_{n+1})g_{n+1})\\
&&+\sum_{i=1}^{n}(-1)^{i+n+1}\alpha_{n}(\huaD(g_1)g_1,\cdots,\huaD(g_i)g_i\huaD(g_{i+1})g_{i+1},\cdots,\huaD(g_{n+1})g_{n+1})\\
&&+\alpha_n(\huaD(g_1)g_1,\cdots,\huaD(g_n)g_n)-\alpha_n(g_1,\cdots,g_n)-\sum_{i=1}^n(-1)^{n+i+1}\alpha_{n}(g_1,\cdots,g_ig_{i+1},\cdots, g_{n+1})\\
&&+(-1)^{n}\Theta(g_1)\alpha_{n}(g_2, \cdots, g_{n+1})+(-1)^{n}T(\Theta(g_1)\alpha_{n}(g_2, \cdots, \g_{n+1}))\\
&&+\sum_{i=1}^{n}(-1)^{n+i}T(\alpha_{n}(g_1,\cdots,g_ig_{i+1},\cdots g_{n+1}))-T(\alpha_{n}(g_1, \cdots, g_n))\Big)\\
&=&(-1)^{n}\Big(\dM^{\Theta}\alpha_n(\huaD(g_1)g_1,\cdots, \huaD(g_{n+1})g_{n+1})-T(\dM^{\Theta}\alpha_{n}(g_1, \cdots, g_{n+1}))-\dM^{\Theta}\alpha_{n}(g_1,\cdots,g_{n+1})\Big)\\
&=&-\hk(\dM^{\Theta}(\alpha_n))(g_1,\cdots,g_{n+1}),
\end{eqnarray*}
which implies that $\dTD\circ \hk +\hk\circ\dM^{\Theta} =0$. Since when $n\geq3$, we have
$$
\dTD\circ\frkK +\frkK\circ\dM^{\Theta} =\dTD\circ\hk +\hk\circ\dM^{\Theta}.
$$
Thus $\dTD\circ\frkK+\frkK\circ\dM^{\Theta}=0$, when $n\geq 3$.

When $n=1$,
\begin{eqnarray*}
&&\Big(\dTD(\frkK(\alpha_1))+\frkK(\dM^{\Theta}\alpha_1)\Big)(g_1, g_2)\\
&=&\Big(\dTD(\pk(\alpha_1))+\pk(\dM^{\Theta}\alpha_1)\Big)(g_1, g_2)+\Big(\dTD(\hk(\alpha_1))+\hk(\dM^{\Theta}\alpha_1)\Big)(g_1, g_2)\\
&=&-\Theta(\huaD(g_1)g_1)\Theta(\huaD(g_2))\alpha_1(g_2)+\Theta(\huaD(g_1)g_1)\alpha_1(\huaD(g_2)g_2)-\Theta(\huaD(g_1)g_1)\alpha(\huaD(g_2))\\
&&+\Theta(\huaD(g_1g_2))\alpha_1(g_1g_2)-\alpha_1(\huaD(g_1g_2)g_1g_2)+\alpha_1(\huaD(g_1g_2))-\Theta(\huaD(g_1))\alpha_1(g_1)+\alpha_1(\huaD(g_1)g_1)\\
&&-\alpha_1(\huaD(g_1))+\Theta(\huaD(g_1))\alpha_1(g_1)-\alpha_1(\huaD(g_1)g_1)+\alpha_1(\huaD(g_1))-\Theta(\huaD(g_1g_2))\alpha_1(g_1g_2)\\
&&+\alpha_1(\huaD(g_1g_2)g_1g_2)-\alpha_1(\huaD(g_1g_2))+\Theta(\huaD(g_1)g_1)\Theta(\huaD(g_2))\alpha_1(g_2)-\Theta(\huaD(g_1)g_1)\alpha_1(\huaD(g_2)g_2)\\
&&+\Theta(\huaD(g_1)g_1)\alpha_1(\huaD(g_2))+\dTD(\hk(\alpha_1))(g_1, g_2)+\hk(\dM^{\Theta}\alpha_1)(g_1, g_2)\\
&=&0.
\end{eqnarray*}
When $n=2$, since $\dM^{\Theta}\alpha_2\in C^{3}(G, V)$, then $\frkK(\dM^{\Theta}\alpha_2)=\hk(\dM^{\Theta}\alpha_2)$. Thus it follows that
\begin{eqnarray*}
&&\Big(\dTD(\frkK(\alpha_2))+\frkK(\dM^{\Theta}\alpha_2)\Big)(g_1, g_2, g_3)\\
&=&\dTD(\pk(\alpha_2))(g_1, g_2, g_3)+\Big(\dTD(\hk(\alpha_2))+\hk(\dM^{\Theta}\alpha_2)\Big)(g_1, g_2, g_3)\\
&=&\Theta(\huaD(g_1)g_1)\pk(\alpha_2)(g_2, g_3)-\pk(\alpha_2)(g_1g_2, g_3)+\pk(\alpha_2)(g_1, g_2g_3)-\pk(\alpha_2)(g_1, g_2)\\
&=&\Theta(\huaD(g_1)g_1)\alpha_2(\huaD(g_2), g_2)-\Theta(\huaD(g_1)g_1)\alpha_2(\huaD(g_2g_3), g_2g_3)\\
&&+\Theta(\huaD(g_1)g_1)\Theta(\huaD(g_2)g_2)\alpha_2(\huaD(g_3), g_3)-\alpha_2(\huaD(g_1g_2), g_1g_2)+\alpha_2(\huaD(g_1g_2g_3), g_1g_2g_3)\\
&&-\Theta(\huaD(g_1g_2)g_1g_2)\alpha_2(\huaD(g_3), g_3)+\alpha_2(\huaD(g_1), g_1)-\alpha_2(\huaD(g_1g_2g_3), g_1g_2g_3)\\
&&+\Theta(\huaD(g_1)g_1)\alpha_2(\huaD(g_2g_3), g_2g_3)-\alpha_2(\huaD(g_1), g_1)\\
&&+\alpha_2(\huaD(g_1g_2), g_1g_2)-\Theta(\huaD(g_1)g_1)\alpha_2(\huaD(g_2), g_2)\\
&=&0.
\end{eqnarray*}
Thus for all $n\geq1$, $\delta\circ\delta=0$, which implies that $(\oplus_{n=1}^{\infty}C^{n}(G, \huaD, V, T), \delta)$ is a cochain complex.
\end{proof}

\begin{defi}
The cohomology of the cochain complex $(\oplus_{n=1}^{\infty}C^{n}(G, \huaD, V, T), \delta)$ is called {\bf the cohomology of the difference Lie group} with coefficients in the representation $(V, T, \Theta)$. The corresponding $n$-th cohomology group is denoted by $\huaH^{n}(G, \huaD, V, T)$.
\end{defi}

The relation between various cohomologies are given by the following theorem, which is resemblance of the Mayer-Vietoris sequence.

\begin{thm}\label{cohomology-exact-DG}
There is a short exact sequence of the  cochain complexes:
$$
0\longrightarrow(\oplus_{n=1}^{+\infty}\frkC^{n}(\huaD, T ),\dTD)\stackrel{\frak{i}}{\longrightarrow}(\oplus_{n=1}^{+\infty}C^n(G, \huaD, V, T), \delta)\stackrel{\frak{p}}{\longrightarrow} (\oplus_{n=1}^{+\infty}C^n(G, V),\dM^{\Theta})\longrightarrow 0,
$$
where $\frak{i}(\beta_{n-1})=(0,\beta_{n-1})$ and $\frak{p}(\alpha_{n},\beta_{n-1})=\alpha_{n}$ for all $\beta_{n-1}\in \frkC^{n}(\huaD, T)$ and $\alpha_{n}\in C^{n}(G, V)$.

Consequently,
there is a long exact sequence of the  cohomology groups:
$$
\cdots\longrightarrow\huaH^{n}(\huaD, T)\stackrel{\frak{i}_{*}}{\longrightarrow}\huaH^n(G,\huaD, V, T)\stackrel{\frak{p}_{*}}{\longrightarrow} \huaH^n(G, V)\stackrel{\frkk^n}\longrightarrow \huaH^{n+1}(\huaD, T)\longrightarrow\cdots,
$$
where the connecting map $\frkk^n$ is defined by
\begin{equation}\label{connmap}
\frkk^n([\alpha_n])=[\frkK(\alpha_n)],\quad  \forall [\alpha_n]\in \huaH^{n}(G, V).
\end{equation}
\end{thm}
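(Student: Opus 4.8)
The plan is to verify directly that $\frak{i}$ and $\frak{p}$ are cochain maps fitting into a short exact sequence at each level $n$, and then invoke the standard zig-zag lemma of homological algebra to obtain the long exact sequence together with the description of the connecting map. First I would check that $\frak{i}$ and $\frak{p}$ are morphisms of cochain complexes. For $\frak{p}$ this is immediate from \eqref{defidgco}: the first component of $\delta(\alpha_n,\beta_{n-1})$ is exactly $\dM^{\Theta}\alpha_n$, so $\frak{p}\circ\delta=\dM^{\Theta}\circ\frak{p}$. For $\frak{i}$ one computes $\delta(0,\beta_{n-1})=(\dM^{\Theta}(0),\dTD(\beta_{n-1})+\frkK(0))=(0,\dTD(\beta_{n-1}))$, since $\frkK$ is linear and hence $\frkK(0)=0$; thus $\delta\circ\frak{i}=\frak{i}\circ\dTD$. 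Next I would observe that, by the very definition $C^{n}(G,\huaD,V,T)=C^{n}(G,V)\oplus\frkC^{n}(\huaD,T)$ for $n\ge 2$ (and $C^{1}(G,\huaD,V,T)=C^1(G,V)$, $\frkC^1(\huaD,T)=0$), the sequence
$$
0\longrightarrow\frkC^{n}(\huaD,T)\stackrel{\frak{i}}{\longrightarrow}C^{n}(G,\huaD,V,T)\stackrel{\frak{p}}{\longrightarrow}C^{n}(G,V)\longrightarrow 0
$$
is split exact in each degree: $\frak{i}$ is the inclusion of the second summand, $\frak{p}$ is the projection onto the first, $\frak{i}$ is injective, $\frak{p}$ is surjective, and $\Ker\frak{p}=\Img\frak{i}$. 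This establishes the short exact sequence of cochain complexes.

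Then I would apply the snake/zig-zag lemma: a short exact sequence of cochain complexes $0\to A^{\bullet}\to B^{\bullet}\to C^{\bullet}\to 0$ induces a long exact sequence in cohomology
$$
\cdots\to\huaH^{n}(A)\to\huaH^{n}(B)\to\huaH^{n}(C)\stackrel{\partial}{\to}\huaH^{n+1}(A)\to\cdots,
$$
which in our situation is precisely the asserted long exact sequence relating $\huaH^{n}(\huaD,T)$, $\huaH^{n}(G,\huaD,V,T)$ and $\huaH^{n}(G,V)$, with $\frak{i}_{*}$ and $\frak{p}_{*}$ the induced maps. It remains to identify the connecting homomorphism. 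Given a cocycle $\alpha_n\in C^{n}(G,V)$ with $\dM^{\Theta}\alpha_n=0$, I lift it to $(\alpha_n,0)\in C^{n}(G,\huaD,V,T)$ along $\frak{p}$; apply $\delta$ to get $\delta(\alpha_n,0)=(\dM^{\Theta}\alpha_n,\dTD(0)+\frkK(\alpha_n))=(0,\frkK(\alpha_n))$; and since this lies in $\Img\frak{i}$, the unique preimage under $\frak{i}$ is $\frkK(\alpha_n)\in\frkC^{n+1}(\huaD,T)$. By Theorem \ref{th:coboundary} we know $\delta\circ\delta=0$, which forces $\dTD(\frkK(\alpha_n))=0$ (this is exactly the identity $\dTD\circ\frkK+\frkK\circ\dM^{\Theta}=0$ evaluated on a cocycle $\alpha_n$), so $\frkK(\alpha_n)$ is a cocycle in $\frkC^{\bullet}(\huaD,T)$ and its class is well defined. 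Hence $\frkk^{n}([\alpha_n])=[\frkK(\alpha_n)]$, which is \eqref{connmap}.

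The argument is essentially formal once the short exact sequence is in place, so there is no serious obstacle; the only points requiring care are bookkeeping ones. One must handle the low-degree edge cases separately — in particular that $\frkC^{1}(\huaD,T)=0$ so the sequence starts correctly at $n=1$ with $C^{1}(G,\huaD,V,T)=C^{1}(G,V)$ — and one must be sure that the direct-sum decomposition used to split the sequence is compatible with the coboundary operators in the sense already packaged in \eqref{defidgco}. The well-definedness of $\frkk^{n}$ (independence of the choice of representative $\alpha_n$ and of the lift) is the usual diagram chase, but here it is even cleaner because the sequence is split in each degree, so the lift $(\alpha_n,0)$ is canonical; changing $\alpha_n$ by a coboundary $\dM^{\Theta}\gamma_{n-1}$ changes $\frkK(\alpha_n)$ by $\frkK(\dM^{\Theta}\gamma_{n-1})=-\dTD(\frkK(\gamma_{n-1}))$, again using Theorem \ref{th:coboundary}, hence by a coboundary. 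This completes the plan.
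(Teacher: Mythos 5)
Your proposal is correct and follows essentially the same route as the paper: the degreewise split exact sequence of cochain complexes is read off from \eqref{defidgco}, the zig-zag lemma gives the long exact sequence, and the connecting map is identified by lifting a cocycle $\alpha_n$ to $(\alpha_n,0)$ and computing $\delta(\alpha_n,0)=(0,\frkK(\alpha_n))$. Your additional checks (that $\frak{i},\frak{p}$ are cochain maps, the low-degree edge case $\frkC^1(\huaD,T)=0$, and the well-definedness of $\frkk^n$ via $\dTD\circ\frkK+\frkK\circ\dM^{\Theta}=0$) are correct and only make explicit what the paper leaves implicit.
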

\begin{proof}
By \eqref{defidgco}, we have the short exact sequence  of cochain complexes which induces a long exact sequence of cohomology groups. Moreover, if $\dM^{\Theta}\alpha_n=0$, then we can chose $(\alpha_n, 0)\in C^{n}(G, \huaD, V, T)$ such that $\frak{p}(\alpha_n, 0)=\alpha_n$. Since $\delta(\alpha_n, 0)=(\dM^{\Theta}\alpha_n, \frkK(\alpha_n))$, it follows that $\frkk^{n}([\alpha_n])=[\frkK(\alpha_n)]$.
\end{proof}

\section{The van Est theorem for cohomologies of difference Lie groups}\label{sec:van}
In this section, we establish the van Est theorem for cohomologies of difference Lie groups and cohomologies of difference Lie algebras, which can viewed as a justification of our
cohomology theory for difference Lie groups.
\subsection{Cohomologies of difference Lie algebras}
Let $(\g, D)$ be a difference Lie algebra. A cohomology theory of difference Lie algebras was introduced in \cite{JS} as follows. Let $(V, T, \theta)$ be a representation of $(\g, D)$.  Define the space of $1$-cochains $C^1(\g, D, V, T)$ to be $\Hom(\g,V)$. For $n\geq 2$, define the space of $n$-cochains $C^n(\g, D, V, T)$  by
$$
C^n(\g,D, V, T)=C^{n}(\g, V)\oplus \frkC^{n}(D, T),
$$
where $C^{n}(\g, V)=\Hom(\wedge^{n}\g,V)$, and $\frkC^{1}(D, T)=0, \frkC^{n}(D, T)=\Hom(\wedge^{n-1}\g ,V)$ for $n\geq2$.

Define the coboundary operator
$
\delta_{\theta}:C^n(\g,D, V, T)\lon C^{n+1}(\g, D, V, T)
$
by
\begin{equation}\label{deficodlg}
\delta_{\theta}(\zeta_n, \xi_{n-1})=(\dM^{\theta}\zeta_n, \dM^{\theta_{D}}\xi_{n-1}+K(\zeta_n)),
\end{equation}
for all $\zeta_n\in\Hom(\wedge^{n}\g,V), \xi_{n-1}\in\Hom(\wedge^{n-1}\g,V)$,
where $\dM^{\theta}$ and $\dM^{\theta_D}$ are the Chevalley-Eilenberg coboundary operators of the Lie algebra $\g$ with coefficients in the representation $(V, \theta)$ and $(V, \theta_D)$ respectively. Here $\theta_D: \g\lon\gl(V)$ is the representation of $\g$ on $V$ defined by $$\theta_D(x)u=\theta(x)u+\theta(D(x))u,$$
and $K: \Hom(\wedge^{n}\g,V)\rightarrow\Hom(\wedge^{n}\g, V)$ is defined by
\begin{eqnarray*}
&&K(\zeta_{n})(x_1,\cdots,x_{n})\\
\nonumber&=&(-1)^{n}\Big(\sum_{k=1}^{n}\sum_{1\leq i_1<\cdots<i_k\leq n}\zeta_n(x_1,\cdots,x_{i-1},D(x_{i_1}),\cdots,D(x_{i_k}),\cdots,x_n)-T(\zeta_n(x_1,\cdots,x_{n}))\Big).
\end{eqnarray*}
The corresponding $n$-th cohomology group is denoted by $H^{n}(\g, D, V, T)$. Moreover, we denote $n$-th cohomology groups of the cochain complexes $(\oplus_{n=1}^{+\infty}C^{n}(\g, V), \dM^{\theta})$ and $(\oplus_{n=1}^{\infty}\frkC^{n}(D, T), \dM^{\theta_D})$ by $H^{n}(\g, V)$ and $H^{n}(D, T)$ respectively. Then there is the following theorem.

\begin{thm}
With the above notations, there is a short exact sequence of the  cochain complexes:
$$
0\longrightarrow(\oplus_{n=1}^{+\infty}\frkC^{n}(D, T),\dM^{\theta_D})\stackrel{\iota}{\longrightarrow}(\oplus_{n=1}^{+\infty}C^n(\g, D, V, T), \delta_\theta)\stackrel{p}{\longrightarrow} (\oplus_{n=1}^{+\infty}C^{n}(\g, V),\dM^{\theta})\longrightarrow 0,
$$
where $\iota(\xi_{n-1})=(0,\xi_{n-1})$ and $p(\zeta_{n},\xi_{n-1})=\zeta_{n}$ for all $\xi_{n-1}\in \Hom(\wedge^{n-1}\g, V)$ and $\zeta_{n}\in \Hom(\wedge^{n}\g, V)$.

Consequently,
there is a long exact sequence of the  cohomology groups:
$$
\cdots\longrightarrow H^{n}(D, T)\stackrel{\iota_{*}}{\longrightarrow} H^n(\g, D, V, T)\stackrel{p_{*}}{\longrightarrow} H^n(\g, V)\stackrel{k^n}\longrightarrow H^{n+1}(D, T)\longrightarrow\cdots,
$$
where the connecting map $k^n$ is defined by
\begin{equation}\label{tranco}
k^n([\zeta_n])=[K(\zeta_n)],  \quad \forall [\zeta_n]\in H^{n}(\g, V).
\end{equation}
\end{thm}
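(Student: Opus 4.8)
The plan is to follow the pattern of the proof of Theorem \ref{cohomology-exact-DG}, replacing the group cochains, differentials and operator $(\dM^\Theta,\dTD,\frkK)$ by their infinitesimal counterparts $(\dM^\theta,\dM^{\theta_D},K)$. The first observation is that the coboundary operator \eqref{deficodlg} is triangular with respect to the decomposition $C^n(\g, D, V, T)=C^n(\g, V)\oplus\frkC^n(D, T)$: the $C^n(\g, V)$-component of $\delta_\theta(\zeta_n,\xi_{n-1})$ is $\dM^\theta\zeta_n$ and is independent of $\xi_{n-1}$, while $\delta_\theta(0,\xi_{n-1})=(0,\dM^{\theta_D}\xi_{n-1})$. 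Hence the projection $p$ onto the first summand commutes with the differentials, and the inclusion $\iota$ of the second summand is a morphism of cochain complexes because $\delta_\theta$ restricts to $\dM^{\theta_D}$ there. That $(\oplus_n C^n(\g, D, V, T),\delta_\theta)$ is genuinely a cochain complex, equivalently the identity $\dM^{\theta_D}\circ K+K\circ\dM^\theta=0$, is part of the construction recalled from \cite{JS}; if a self-contained verification is wanted it is the infinitesimal analogue of the Chevalley--Eilenberg computation performed in the proof of Theorem \ref{th:coboundary}.

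Next I would verify that
$$0\longrightarrow\frkC^\bullet(D, T)\stackrel{\iota}{\longrightarrow}C^\bullet(\g, D, V, T)\stackrel{p}{\longrightarrow}C^\bullet(\g, V)\longrightarrow 0$$
is exact in each degree. Injectivity of $\iota$ and surjectivity of $p$ are clear from their formulas, and exactness in the middle holds because $\Ker p=\{(0,\xi_{n-1})\}=\Img\iota$; in degree $1$ this reads $0\to 0\to\Hom(\g, V)\stackrel{\mathrm{id}}{\to}\Hom(\g, V)\to 0$, in accordance with the conventions $\frkC^1(D, T)=0$ and $C^1(\g, D, V, T)=\Hom(\g, V)$.

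The long exact sequence in cohomology then follows from the standard zig-zag (snake) lemma applied to this short exact sequence of cochain complexes. It remains only to identify the connecting map. For a class $[\zeta_n]\in H^n(\g, V)$, so $\dM^\theta\zeta_n=0$, the element $(\zeta_n,0)\in C^n(\g, D, V, T)$ is a $p$-preimage and
$$\delta_\theta(\zeta_n,0)=(\dM^\theta\zeta_n,\dM^{\theta_D}(0)+K(\zeta_n))=(0,K(\zeta_n))=\iota(K(\zeta_n));$$
moreover $K(\zeta_n)$ is a $\dM^{\theta_D}$-cocycle since $\dM^{\theta_D}K(\zeta_n)=-K(\dM^\theta\zeta_n)=0$. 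Therefore $k^n([\zeta_n])=[K(\zeta_n)]$, which is \eqref{tranco}.

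I anticipate no real obstacle: the proof is formally identical to that of Theorem \ref{cohomology-exact-DG}. The only mild care needed is the bookkeeping of the degree shift, namely that an $n$-cochain of the difference Lie algebra pairs $C^n(\g, V)=\Hom(\wedge^n\g, V)$ with $\frkC^n(D, T)=\Hom(\wedge^{n-1}\g, V)$, so that $K$ is to be read as a map $C^n(\g, V)\to\frkC^{n+1}(D, T)$, together with the trivial degree-$1$ case noted above.
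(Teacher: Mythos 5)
Your proposal is correct and follows essentially the same route as the paper: both use the triangular form of $\delta_\theta$ from \eqref{deficodlg} to obtain the short exact sequence of complexes, invoke the standard long exact sequence, and identify the connecting map by lifting a cocycle $\zeta_n$ to $(\zeta_n,0)$ and computing $\delta_\theta(\zeta_n,0)=(0,K(\zeta_n))$. Your write-up is merely more explicit about the degree conventions and the identity $\dM^{\theta_D}\circ K+K\circ\dM^\theta=0$, which the paper leaves implicit.
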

\begin{proof}
By \eqref{deficodlg}, we have the short exact sequence of cochain complexes which induces a long exact sequence of cohomology groups. Moreover, if $\dM^{\theta}\zeta_n=0$, then we can chose $(\zeta_n, 0)\in C^{n}(\g, D, V, T)$ such that $p(\zeta_n, 0)=0$, which implies that $\delta_{\theta}(\zeta_n, 0)=(\dM^{\theta}\zeta_n, K(\zeta_n))$. Thus $k^{n}([\zeta_n])=[K(\zeta_n)]$.
\end{proof}

\subsection{The van Est theorem}

Let $G$ be a Lie group and $\g$ its Lie algebra. Let
 $\Theta:G\to GL(V)$ be a representation of $G$ and $\theta:\g\to\gl(V)$ the induced representation of $\g$.
Define $$\mathrm{VE}_n: C^{n}(G, V)\lon C^{n}(\g, V)$$ by
\begin{eqnarray*}
&&\mathrm{VE}_n(\alpha_n)(x_1, \cdots, x_n)\\
&=&\sum_{\epsilon\in S_{n}}(-1)^{|\epsilon|}\frac{d}{dt_{\epsilon(1)}}\cdots\frac{d}{dt_{\epsilon(n)}}\bigg|_{t_{\epsilon(1)}=\cdots=t_{\epsilon(n)}=0}\alpha_{n}\Big(\exp(t_{\epsilon(1)}x_{\epsilon(1)}), \cdots, \exp(t_{\epsilon(n)}x_{\epsilon(n)})\Big), \quad \forall x_1,\cdots, x_n\in\g.
\end{eqnarray*}
From the classical argument for the cohomologies of Lie groups and Lie algebras, $$\mathrm{VE}: \oplus_{n=1}^{\infty} C^{n}(G, V)\lon\oplus_{n=1}^{\infty}C^{n}(\g, V)$$ is a cochain map, which induces   homomorphisms ${\mathrm{VE}_{n}}_{*}$ from the cohomology group $\huaH^{n}(G, V)$ to $H^{n}(\g, V)$. Moreover, under certain conditions, the cohomology group $\huaH^{k}(G, V)$ and $H^{k}(\g, V)$ are isomorphic.

\begin{thm}\label{van}\rm(\cite{Van})
Let $G$ be a connected Lie group and its homotopy groups are trivial in $1, \cdots, n$, then for all $1\leq i\leq n$, the cohomology group $\huaH^{i}(G, V)$ is isomorphic to the cohomology group $H^{i}(\g, V)$.
\end{thm}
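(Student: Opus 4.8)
The plan is to establish this classical van Est isomorphism \cite{Van} by interpolating between the differentiable cohomology $\huaH^\bullet(G,V)$ and the Lie algebra cohomology $H^\bullet(\g,V)$ through the total cohomology of a double complex, following the strategy of \cite{Cr}. Concretely, I would consider
\[
A^{p,q}=C^\infty(G^p,\Omega^q(G,V)),\qquad p,q\geq 0,
\]
where $\Omega^q(G,V)$ is the space of smooth $V$-valued $q$-forms on $G$, regarded as a $G$-module via left translation twisted by $\Theta$; the horizontal differential is the differentiable group coboundary with values in the module $\Omega^q(G,V)$, and the vertical one is the de Rham differential on the remaining copy of $G$ (the two commute because $\Theta$ is a constant linear isomorphism). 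This is a first-quadrant double complex, so both of its spectral sequences converge to $H^\bullet(\mathrm{Tot}\,A)$.

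First I would run the spectral sequence that starts with horizontal cohomology. Trivializing $T^*G$ by left-invariant forms identifies $\Omega^q(G,V)$ with $C^\infty(G)\otimes(\wedge^q\g^*\otimes V)$ and exhibits it as a coinduced $G$-module; by the standard acyclicity of coinduced modules in differentiable cohomology — the complex $C^\infty(G^\bullet,C^\infty(G,W))$ is of bar type and carries an explicit contracting homotopy — one gets $\huaH^{>0}(G,\Omega^q(G,V))=0$ and $\huaH^0(G,\Omega^q(G,V))=\Omega^q(G,V)^G$. Hence this spectral sequence collapses onto the column $p=0$, and the induced vertical differential there is precisely the Chevalley--Eilenberg differential of $\g$ with coefficients in $(V,\theta)$, $\theta=\Theta_{*e_G}$, under the isomorphism $\Omega^\bullet(G,V)^G\cong\wedge^\bullet\g^*\otimes V$. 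Therefore $H^k(\mathrm{Tot}\,A)\cong H^k(\g,V)$ for all $k$, with no hypothesis on $G$.

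Next I would bring in the connectivity hypothesis through the other spectral sequence, starting with vertical cohomology. Since the vertical differential is the ordinary de Rham differential tensored with $\mathrm{id}_V$, its cohomology is $C^\infty(G^p,H^q_{\mathrm{dR}}(G)\otimes V)$, and the next differential computes $\huaH^p(G,H^q_{\mathrm{dR}}(G)\otimes V)$. By the Hurewicz theorem the vanishing of $\pi_1(G),\dots,\pi_n(G)$ yields $\widetilde H_i(G)=0$ for $i\leq n$, so $H^q_{\mathrm{dR}}(G)=0$ for $1\leq q\leq n$ while $H^0_{\mathrm{dR}}(G)=\R$. Thus on the $E_2$-page the only entries of total degree $\leq n$ sit in the row $q=0$, where the term is $\huaH^p(G,V)$; all higher differentials into that row originate from already-vanishing entries, so the spectral sequence degenerates in this range and $H^k(\mathrm{Tot}\,A)\cong\huaH^k(G,V)$ for $k\leq n$. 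Combining the two computations gives $\huaH^i(G,V)\cong H^i(\g,V)$ for $1\leq i\leq n$.

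Finally I would identify this composite isomorphism with ${\mathrm{VE}_i}_*$ by chasing a group cocycle $\alpha_i\in C^i(G,V)$ through the zig-zag in $A^{\bullet,\bullet}$: lifting it, differentiating successively along the last $G$-factor, and passing to the left-invariant representative should reproduce exactly the alternating sum of iterated derivatives at the identity that defines $\mathrm{VE}_i$. I expect this last identification — together with the careful bookkeeping of the twisted module structures and signs in the double complex — to be the only genuinely delicate point; the two vanishing inputs (acyclicity of $\Omega^q(G,V)$ and $H^q_{\mathrm{dR}}(G)=0$ for $1\le q\le n$) are classical. As the theorem itself is classical, in the body of the paper one may of course simply invoke \cite{Van}.
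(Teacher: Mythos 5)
The paper offers no proof of this statement at all: Theorem \ref{van} is quoted verbatim from \cite{Van} and used as a black box, so there is nothing internal to compare your argument against. Your sketch is the standard modern route to the van Est isomorphism (essentially Crainic's double-complex argument), and its architecture is sound: the row-wise acyclicity of $C^\infty(G^\bullet,\Omega^q(G,V))$ via the coinduced-module contracting homotopy, the identification of the surviving column with the Chevalley--Eilenberg complex of $(\g,\theta)$, and the Hurewicz step $\pi_1(G)=\dots=\pi_n(G)=0\Rightarrow H^q_{\mathrm{dR}}(G)=0$ for $1\le q\le n$ are all correct, and the convergence/degeneration bookkeeping in total degree $\le n$ checks out. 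The one point you wave at but underestimate is the identification of the vertical $E_1$-term with $C^\infty(G^p,H^q_{\mathrm{dR}}(G)\otimes V)$: the vertical differential acts fibrewise over $G^p$, so its cohomology consists of smooth families of closed forms modulo smooth families of exact forms, and passing from that to smooth maps into the cohomology requires a parametrized (smooth-in-$G^p$) Poincar\'e lemma or a smooth contraction of $G$ through the relevant skeleta; topological $n$-connectedness alone does not hand you this without an additional argument (this is handled carefully in \cite{Cr}). Likewise the final identification of the zig-zag map with the antisymmetrized iterated derivative $\mathrm{VE}_i$ of the paper is true but is real work, not an afterthought. As a replacement for the paper's citation your outline is acceptable in spirit, but if one actually wants a self-contained proof these two points must be filled in; otherwise simply invoking \cite{Van}, as the paper does, is the honest option.
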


Let $(G, \huaD)$ be a difference Lie group and $(\g, D)$ be the corresponding difference Lie algebra. Let $(V, T, \Theta)$ be a representation of $(G, \huaD)$. By Theorem \ref{thm-rep-DGL}, $(V, T, \theta)$ is a representation of $(\g, D)$. By Theorem \ref{defiarepD}, $\Theta_\huaD$ defined by $\Theta_{\huaD}(g)u=\Theta(\huaD(g)g)u$ is a representation of $G$ on $V$. Moreover, we have
$$(\Theta_\huaD)_{* e_G}(x)u=\theta(D(x))u+\theta(x)u=\theta_D(x)u, \quad \forall x\in\g, u\in V, $$
which is a representation of $\g$ on $V$.

Define $\widetilde{\mathrm{VE}}_{n}: C^{n}(G, \huaD, V, T)\lon C^{n}(\g, D, V, T)$ by
$$
\widetilde{\mathrm{VE}}_{n}(\alpha_n, \beta_{n-1})=(\mathrm{VE}_{n}(\alpha_n), \mathrm{VE}_{n-1}(\beta_{n-1})),\quad \forall (\alpha_n, \beta_{n-1})\in C^{n}(G, \huaD, V, T).
$$
Then we have the following theorem.

\begin{thm}\label{chainhomtopy}
With the above notations, $\widetilde{\mathrm{VE}}: \oplus_{n=1}^{+\infty}C^{n}(G, \huaD, V, T)\lon \oplus_{n=1}^{+\infty}C^{n}(\g, D, V, T)$ is a cochain map, which induces homomorphisms $\widetilde{\mathrm{VE}}_{n*}$ from the cohomology group $\huaH^{n}(G, \huaD, V, T)$ to $H^{n}(\g, D, V, T)$. The map $\widetilde{\mathrm{VE}}$ is called the {\bf van Est map}.
\end{thm}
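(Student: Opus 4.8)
The plan is to verify directly that $\widetilde{\mathrm{VE}}$ commutes with the coboundary operators $\delta$ and $\delta_\theta$, the rest (passing to cohomology, obtaining the induced maps $\widetilde{\mathrm{VE}}_{n*}$) being formal. Concretely, I would unravel the definitions: for $(\alpha_n,\beta_{n-1})\in C^n(G,\huaD,V,T)$,
$$
\widetilde{\mathrm{VE}}_{n+1}\bigl(\delta(\alpha_n,\beta_{n-1})\bigr)=\bigl(\mathrm{VE}_{n+1}(\dM^\Theta\alpha_n),\ \mathrm{VE}_n(\dTD\beta_{n-1}+\frkK(\alpha_n))\bigr),
$$
while
$$
\delta_\theta\bigl(\widetilde{\mathrm{VE}}_n(\alpha_n,\beta_{n-1})\bigr)=\bigl(\dM^\theta\mathrm{VE}_n(\alpha_n),\ \dM^{\theta_D}\mathrm{VE}_{n-1}(\beta_{n-1})+K(\mathrm{VE}_n(\alpha_n))\bigr).
$$
So the claim reduces to three identities: (i) $\mathrm{VE}_{n+1}\circ\dM^\Theta=\dM^\theta\circ\mathrm{VE}_n$ on $C^n(G,V)$; (ii) $\mathrm{VE}_n\circ\dTD=\dM^{\theta_D}\circ\mathrm{VE}_{n-1}$ on $\frkC^n(\huaD,T)$; and (iii) $\mathrm{VE}_n\circ\frkK=K\circ\mathrm{VE}_n$ on $C^n(G,V)$.

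Identity (i) is precisely the classical statement, recalled in the excerpt, that $\mathrm{VE}$ is a cochain map between the Lie group complex with coefficients in $(V,\Theta)$ and the Chevalley--Eilenberg complex with coefficients in $\theta=\Theta_{*e_G}$. Identity (ii) is the same classical statement applied to the representation $(V,\Theta_\huaD)$ of $G$: here $\frkC^n(\huaD,T)=C^{n-1}(G,V)$ with coboundary $\dTD$ equal to the Lie group coboundary for $(V,\Theta_\huaD)$, and $\dM^{\theta_D}$ is the Chevalley--Eilenberg coboundary for the induced representation $(\Theta_\huaD)_{*e_G}=\theta_D$, an equality of representations of $\g$ that was already noted right before the statement. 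Thus (ii) needs no new work beyond invoking the classical van Est cochain-map property.

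The substantive point is identity (iii), relating the ``difference'' pieces $\frkK=\pk+\hk$ at the group level and $K$ at the algebra level. I would handle the $\hk$ part and the $\pk$ part separately. For $\hk$: applying $\mathrm{VE}_n$ to \eqref{defi-coht} and using the chain rule to differentiate $\alpha_n(\huaD(g_1)g_1,\cdots,\huaD(g_n)g_n)$ at the identity along the curves $\exp(t_{\epsilon(i)}x_{\epsilon(i)})$, one sees that each factor $\huaD(\exp(tx))\exp(tx)$ contributes a $t$-derivative which by the chain rule splits the slot $x_i$ into a sum over subsets, where the chosen indices are replaced by $D(x_i)=\huaD_{*e_G}(x_i)$ and the others by $x_i$; summing the Leibniz contributions over all $\epsilon\in S_n$ reproduces exactly the combinatorial sum $\sum_{k}\sum_{i_1<\cdots<i_k}\zeta_n(\dots,D(x_{i_1}),\dots,D(x_{i_k}),\dots)$ in the definition of $K$, while the $-T(\alpha_n)-\alpha_n$ terms map to $-T(\mathrm{VE}_n\alpha_n)$ and (after antisymmetrization) cancel/contribute appropriately to match $K(\mathrm{VE}_n\alpha_n)$. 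For the $\pk$ part (nonzero only for $n=1,2$), one checks by hand that $\mathrm{VE}_n\circ\pk$ vanishes: each summand of \eqref{defisalpha} involves $\alpha$ evaluated with one argument equal to $\huaD(\exp(tx))$ or $\huaD(\exp(tx))\exp(tx)$ and the normalized/antisymmetrized derivative of such terms cancels in pairs (this uses $\huaD(e_G)=e_G$ and that differentiating $\huaD(\exp(tx))$ and $\huaD(\exp(tx))\exp(tx)$ both land in $\g$, so the two-argument expressions become linear combinations that the antisymmetrization in $\mathrm{VE}_1,\mathrm{VE}_2$ kills), matching the fact that no ``$\pk$-type'' term appears in the algebra-level $K$.

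The main obstacle will be the bookkeeping in identity (iii): getting the signs $(-1)^n$, the symmetric-group sum in $\mathrm{VE}_n$, and the subset sum in $K$ to align, and verifying the vanishing of $\mathrm{VE}_n\circ\pk$ in the low-degree cases $n=1,2$ without sign errors. Everything else is a direct appeal to the classical van Est cochain-map property together with the identification of induced representations already established in the excerpt. Once (i)--(iii) are in hand, $\widetilde{\mathrm{VE}}\circ\delta=\delta_\theta\circ\widetilde{\mathrm{VE}}$ follows componentwise, so $\widetilde{\mathrm{VE}}$ is a cochain map and hence descends to the asserted homomorphisms $\widetilde{\mathrm{VE}}_{n*}:\huaH^n(G,\huaD,V,T)\to H^n(\g,D,V,T)$.
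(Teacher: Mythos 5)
Your proposal is correct and follows essentially the same route as the paper: reduce to the three identities, dispatch (i) and (ii) by the classical van Est cochain-map property for the representations $\Theta$ and $\Theta_\huaD$ (using $(\Theta_\huaD)_{*e_G}=\theta_D$), and for (iii) split $\frkK=\pk+\hk$, showing $\mathrm{VE}_n\circ\hk=K\circ\mathrm{VE}_n$ via $\huaD(\exp(tx))\exp(tx)=\exp(t(D(x)+x))$ together with multilinearity, and $\mathrm{VE}_n\circ\pk=0$ for $n=1,2$ by direct computation using normalization. This is exactly the structure of the paper's proof.
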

\begin{proof}
For $n\geq1, \alpha_n\in C^{n}(G, V),  \beta_{n-1}\in\frkC^{n}(\huaD, T)$, from the classical argument for the cohomologies of Lie groups and Lie algebras, we have
\begin{eqnarray*}
\widetilde{\mathrm{VE}}_{n+1}(\dM^{\Theta}\alpha_n, \frkK(\alpha_n)+\dTD\beta_{n-1})&=&(\mathrm{VE}_{n+1}(\dM^{\Theta}\alpha_n), \mathrm{VE}_{n}(\frkK(\alpha_n))+\mathrm{VE}_{n}(\dTD\beta_{n-1}))\\
&=&\Big(\dM^{\theta}(\mathrm{VE}_{n}(\alpha_n)), \mathrm{VE}_{n}(\frkK(\alpha_n))+\dM^{\theta_{D}}(\mathrm{VE}_{n-1}(\beta_{n-1}))\Big).
\end{eqnarray*}
Moreover, since $\frkK(\alpha_n)=\pk(\alpha_n)+\hk(\alpha_n)$ and denote $\mathrm{VE}_{n}(\alpha_n)$ by $\zeta_n$, by \eqref{defi-coht}, for any $x_1, \cdots, x_n\in\g$, we have
\begin{eqnarray*}
&&\mathrm{VE}_{n}(\hk(\alpha_n))(x_1, \cdots, x_n)\\
&=&(-1)^{n}\sum_{\epsilon\in S_n}(-1)^{|\epsilon|}\frac{d}{dt_{\epsilon(1)}}\cdots\frac{d}{dt_{\epsilon(n)}}\bigg|_{t_{\epsilon(1)}=\cdots=t_{\epsilon(n)}=0}\Big(-\alpha_{n}(\exp(t_{\epsilon(1)}x_{\epsilon(1)}), \cdots, \exp(t_{\epsilon(n)}x_{\epsilon(n)}))\\
&&+\alpha_{n}\Big(\huaD(\exp(t_{\epsilon(1)}x_{\epsilon(1)}))\exp(t_{\epsilon(1)}x_{\epsilon(1)}), \cdots, \huaD(\exp(t_{\epsilon(n)}x_{\epsilon(n)}))\exp(t_{\epsilon(n)}x_{\epsilon(n)})\Big)\\
&&-T(\alpha_{n}(\exp(t_{\epsilon(1)}x_{\epsilon(1)}), \cdots, \exp(t_{\epsilon(n)}x_{\epsilon(n)})))\Big)\\
&=&(-1)^{n}\sum_{\epsilon\in S_n}(-1)^{|\epsilon|}\frac{d}{dt_{\epsilon(1)}}\cdots\frac{d}{dt_{\epsilon(n)}}\bigg|_{t_{\epsilon(1)}=\cdots=t_{\epsilon(n)}=0}\Big(-\alpha_{n}(\exp(t_{\epsilon(1)}x_{\epsilon(1)}), \cdots, \exp(t_{\epsilon(n)}x_{\epsilon(n)}))\\
&&+\alpha_{n}\Big(\exp(t_{\epsilon(1)}(D(x_{\epsilon(1)})+x_{\epsilon(1)})), \cdots, \exp(t_{\epsilon(n)}(D(x_{\epsilon(n)})+x_{\epsilon(n)}))\Big)\\
&&-T(\alpha_{n}(\exp(t_{\epsilon(1)}x_{\epsilon(1)}), \cdots, \exp(t_{\epsilon(n)}x_{\epsilon(n)})))\Big)\\
&=&(-1)^{n}\Big(\zeta_n(D(x_1)+x_1, \cdots, D(x_n)+x_n)-T(\zeta_n(x_1, \cdots, x_n))-\zeta_n(x_1, \cdots, x_n)\Big)\\
&=&(-1)^{n}\Big(\sum_{k=1}^{n}\sum_{1\leq i_1<\cdots<i_k\leq n}\zeta_n(x_1,\cdots,x_{i-1},D(x_{i_1}),\cdots,D(x_{i_k}),\cdots,x_n)-T(\zeta_n(x_1,\cdots,x_{n}))\Big)\\
&=&K(\mathrm{VE}_{n}(\alpha_n))(x_1, \cdots, x_n).
\end{eqnarray*}
For $n=1$, denote $\mathrm{VE}_1(\alpha_1)$ by $\zeta_1$. By \eqref{defisalpha} and the fact that $\alpha_1$ is a $1$-normalized cochain, for any $x\in\g$, we have
\begin{eqnarray*}
\mathrm{VE}_{1}(\pk(\alpha_1))(x)&=&\frac{d}{dt}\bigg|_{t=0}\pk(\alpha_1)(\exp(tx))\\
&=&\frac{d}{dt}\bigg|_{t=0}\Big(\alpha_{1}(\huaD(\exp(tx))\exp(tx))-\alpha_1(\huaD(\exp(tx)))-\Theta(\huaD(\exp(tx)))\alpha_{1}(\exp(tx))\Big)\\
&=&\zeta_1(D(x))+\zeta_1(x)-\zeta_1(D(x))-\frac{d}{dt}\bigg|_{t=0}\Theta(\huaD(\exp(tx)))\alpha_{1}(e_G)-\frac{d}{dt}\bigg|_{t=0}\alpha_{1}(\exp(tx))\\
&=&\zeta_1(D(x))+\zeta_1(x)-\zeta_1(D(x))-\theta(D(x))\alpha(e_G)-\zeta_1(x)\\
&=&0.
\end{eqnarray*}
For $n=2$, denote $\mathrm{VE}_2(\alpha_2)$ by $\zeta_2$. By \eqref{defisalpha} and the fact that $\alpha_2$ is a $2$-normalized cochain, for any $x_1, x_2\in\g$, we have
\begin{eqnarray*}
&&\mathrm{VE}_{2}(\pk(\alpha_2))(x_1, x_2)\\
&=&\frac{d}{dt_1}\frac{d}{dt_2}\bigg|_{t_1=t_2=0}\pk(\alpha_2)(\exp(t_1x_1), \exp(t_2x_2))-\frac{d}{dt_2}\frac{d}{dt_1}\bigg|_{t_1=t_2=0}\pk(\alpha_2)(\exp(t_2x_2), \exp(t_1x_1))\\
&=&\frac{d}{dt_2}\frac{d}{dt_1}\bigg|_{t_1=t_2=0}\Big(\alpha_2(\huaD(\exp(t_1x_1)), \exp(t_1x_1))-\alpha_2(\huaD(\exp(t_1x_1)\exp(t_2x_2)), \exp(t_1x_1)\exp(t_2x_2))\\
&&+\Theta(\huaD(\exp(t_1x_1))\exp(t_1x_1))\alpha_2(\huaD(\exp(t_2x_2)), \exp(t_2x_2))\Big)\\
&&-\frac{d}{dt_1}\frac{d}{dt_2}\bigg|_{t_1=t_2=0}\Big(\alpha_2(\huaD(\exp(t_2x_2)), \exp(t_2x_2))-\alpha_2(\huaD(\exp(t_2x_2)\exp(t_1x_1)), \exp(t_2x_2)\exp(t_1x_1))\\
&&+\Theta(\huaD(\exp(t_2x_2)\exp(t_2x_2))\alpha_2(\huaD(\exp(t_1x_1)), \exp(t_1x_1))\Big)\\
&=&-\frac{d}{dt_2}\frac{d}{dt_1}\bigg|_{t_1=t_2=0}\alpha_2(\huaD(\exp(t_2x_2)), \exp(t_1x_1)\exp(t_2x_2))\\
&&-\frac{d}{dt_2}\frac{d}{dt_1}\bigg|_{t_1=t_2=0}\alpha_2(\huaD(\exp(t_1x_1)\exp(t_2x_2)), \exp(t_2x_2))\\
&&+\frac{d}{dt_2}\bigg|_{t_2=0}\theta(D(x_1)+x_1)\alpha_2(\huaD(\exp(t_2x_2)), \exp(t_2x_2))\\
&&+\frac{d}{dt_1}\frac{d}{dt_2}\bigg|_{t_1=t_2=0}\alpha_2(\huaD(\exp(t_1x_1)), \exp(t_2x_2)\exp(t_1x_1))\\
&&+\frac{d}{dt_1}\frac{d}{dt_2}\bigg|_{t_1=t_2=0}\alpha_2(\huaD(\exp(t_2x_2)\exp(t_1x_1)), \exp(t_1x_1))\\
&&-\frac{d}{dt_1}\bigg|_{t_1=0}\theta(D(x_2)+x_2)\alpha_2(\huaD(\exp(t_1x_1)), \exp(t_1x_1))\\
&=&-\frac{d}{dt_1}\frac{d}{dt_2}\bigg|_{t_1=t_2=0}(\alpha_2(e_G, \exp(t_1x_1)\exp(t_2x_2))+\alpha_2(\huaD(\exp(t_2x_2)), \exp(t_1x_1)))\\
&&-\frac{d}{dt_1}\frac{d}{dt_2}\bigg|_{t_1=t_2=0}(\alpha_2(\huaD(\exp(t_1x_1)), \exp(t_2x_2))+\alpha_2(\huaD(\exp(t_1x_1))\exp(t_2x_2), e_G))\\
&&+\frac{d}{dt_2}\bigg|_{t_2=0}\theta(D(x_1)+x_1)(\alpha_2(e_G, \exp(t_2x_2))+\alpha_2(\huaD(\exp(t_2x_2)), e_G))\\
&&+\frac{d}{dt_2}\frac{d}{dt_1}\bigg|_{t_1=t_2=0}(\alpha_2(\huaD(\exp(t_1x_1)), \exp(t_2x_2))+\alpha_2(e_G, \exp(t_2x_2)\exp(t_1x_1)))\\
&&+\frac{d}{dt_2}\frac{d}{dt_1}\bigg|_{t_1=t_2=0}(\alpha_2(\huaD(\exp(t_2x_2)), \exp(t_1x_1))+\alpha_2(\huaD(\exp(t_2x_2)\exp(t_1x_1)), e_G))\\
&&-\frac{d}{dt_1}\bigg|_{t_1=0}\theta(D(x_2)+x_2)(\alpha_2(e_G, \exp(t_1x_1))+\alpha_2(\huaD(\exp(t_1x_1)), e_G))\\
&=&0.
\end{eqnarray*}
Thus, $\mathrm{VE}_n(\frkK(\alpha_n))=\mathrm{VE}_n(\hk(\alpha_n))=K(\mathrm{VE}_n(\alpha_n))$, which implies that $\widetilde{\mathrm{VE}}$ is a cochain map. Therefore $\widetilde{\mathrm{VE}}_{n*}$ are homomorphisms from the cohomology group $\huaH^{n}(G, \huaD, V, T)$ to $H^{n}(\g, D, V, T)$.
\end{proof}

\begin{thm}
 Assume that $G$ is a connected Lie group and its homotopy groups are trivial in $1, \cdots, n$. Then for $1\leq i\leq n$, the cohomology group $\huaH^{i}(G, \huaD, V, T)$ is isomorphic to the cohomology group $H^{i}(\g, D, V, T)$.
\end{thm}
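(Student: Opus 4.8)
The plan is to deduce the isomorphism from the classical van Est theorem (Theorem~\ref{van}), applied to two different representations of $G$, together with the long exact sequence of Theorem~\ref{cohomology-exact-DG} and the analogous one for the difference Lie algebra $(\g,D)$, gluing the two by the five lemma.

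The first observation is that the van Est map $\widetilde{\mathrm{VE}}$ is compatible with the two short exact sequences of cochain complexes. Indeed, since $\widetilde{\mathrm{VE}}_{n}(\alpha_n,\beta_{n-1})=(\mathrm{VE}_{n}(\alpha_n),\mathrm{VE}_{n-1}(\beta_{n-1}))$, one has $\widetilde{\mathrm{VE}}\circ\frak{i}=\iota\circ\mathrm{VE}$ and $p\circ\widetilde{\mathrm{VE}}=\mathrm{VE}\circ\frak{p}$; moreover, on the subcomplexes that $\frak{i}$ and $\iota$ land in, $\widetilde{\mathrm{VE}}$ is, up to the one-degree shift built into the definitions of $\frkC^{\bullet}(\huaD,T)$ and $\frkC^{\bullet}(D,T)$, precisely the van Est cochain map for the representation $\Theta_\huaD$ of $G$ (whose differentiation is $\theta_D=(\Theta_\huaD)_{*e_G}$). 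Hence $\widetilde{\mathrm{VE}}$ is a morphism from the short exact sequence of Theorem~\ref{cohomology-exact-DG} to its Lie algebra analogue, and by naturality of the connecting homomorphism it induces a commutative ladder between the long exact sequence
\[
\cdots\to\huaH^{i-1}(G,V)\xrightarrow{\frkk^{i-1}}\huaH^{i}(\huaD,T)\xrightarrow{\frak{i}_{*}}\huaH^{i}(G,\huaD,V,T)\xrightarrow{\frak{p}_{*}}\huaH^{i}(G,V)\xrightarrow{\frkk^{i}}\huaH^{i+1}(\huaD,T)\to\cdots
\]
and its Lie algebra analogue. In this ladder the middle vertical map at $\huaH^{i}(G,\huaD,V,T)$ is $\widetilde{\mathrm{VE}}_{i*}$; the maps at $\huaH^{i-1}(G,V)$ and $\huaH^{i}(G,V)$ are ${\mathrm{VE}_{i-1}}_{*}$ and ${\mathrm{VE}_{i}}_{*}$; and the maps at $\huaH^{i}(\huaD,T)$ and $\huaH^{i+1}(\huaD,T)$ are identified (via the degree shift) with ${\mathrm{VE}_{i-1}}_{*}$ and ${\mathrm{VE}_{i}}_{*}$ for the representation $\Theta_\huaD$, with the convention that $\huaH^{1}(\huaD,T)=H^{1}(D,T)=0$ since $\frkC^{1}=0$ on both sides.

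Now fix $i$ with $1\leq i\leq n$. Since Theorem~\ref{van} applies to $G$ with arbitrary coefficients, the van Est map induces an isomorphism $\huaH^{j}(G,W)\cong H^{j}(\g,W)$ for every $1\leq j\leq n$ and every representation $W$; applying this with $W=V$ and the representation $\Theta$, and then with $W=V$ and the representation $\Theta_\huaD$, in degrees $i-1$ and $i$ (all of which lie in $\{1,\dots,n\}$, the corner cases in degree $0$ being handled by the vanishing noted above), we conclude that the four outer vertical maps of the ladder at position $i$ are isomorphisms. The five lemma then forces the middle map $\widetilde{\mathrm{VE}}_{i*}\colon\huaH^{i}(G,\huaD,V,T)\to H^{i}(\g,D,V,T)$ to be an isomorphism, which is the assertion.

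Given the two long exact sequences, the cochain-map property of $\widetilde{\mathrm{VE}}$ (Theorem~\ref{chainhomtopy}), and the classical van Est theorem, the argument is essentially formal. The two points that deserve attention are: verifying that $\widetilde{\mathrm{VE}}$ is genuinely compatible with the two short exact sequences, which is immediate from its componentwise definition once $\frkC^{\bullet}(\huaD,T)$ and $\frkC^{\bullet}(D,T)$ are recognized as shifted Lie group, resp.\ Lie algebra, cochain complexes with coefficients in $\Theta_\huaD$, resp.\ $\theta_D$; and checking that, thanks to this one-degree shift, the homotopy-triviality hypothesis is invoked only in degrees $\leq n$ at all four outer positions of the five lemma.
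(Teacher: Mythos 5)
Your proposal is correct and follows essentially the same route as the paper: both establish that $\widetilde{\mathrm{VE}}$ is compatible with the two short exact sequences (hence yields a commutative ladder of long exact sequences), invoke the classical van Est theorem for the representations $\Theta$ and $\Theta_\huaD$ to see that the four outer vertical maps are isomorphisms, and conclude by the five lemma. Your explicit attention to the degree shift in $\frkC^{\bullet}(\huaD,T)$ and to the corner cases in low degree is a welcome clarification of points the paper leaves implicit.
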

\begin{proof}
For $[\alpha_{i-1}]\in\huaH^{i-1}(G, V), [\beta_{i-1}]\in\huaH^{i}(\huaD, T)$ and $[(\alpha_i, \beta'_{i-1})]\in\huaH^{i}(G, \huaD, V, T)$, by Theorem \ref{chainhomtopy}, we have
\begin{eqnarray*}
k^{i-1}(\mathrm{VE}_{i-1*}([\alpha_{i-1}]))&=&[K(\mathrm{VE}_{i-1}(\alpha_{i-1}))]=\mathrm{VE}_{i-1*}(\frkk^{i-1}([\alpha_{i-1}])),\\
\widetilde{\mathrm{VE}}_{i*}(\frak{i}_*(\beta_{i-1}))&=&[(0, \mathrm{VE}_{i-1}(\beta_{i-1}))]=\iota_*(\mathrm{VE}_{i-1*}([\beta_{i-1}])),\\
\mathrm{VE}_{i*}(\frak{p}_*([(\alpha_i, \beta'_{i-1})]))&=&[\mathrm{VE}_i(\alpha_{i})]=p_*(\widetilde{\mathrm{VE}}_{i*}([(\alpha_{i}, \beta'_{i-1})])),
\end{eqnarray*}
where $k^{i-1}$ and $\frkk^{i-1}$ are given by \eqref{tranco} and \eqref{connmap} respectively.  Thus we have the following commutative diagram:
\[\begin{CD}
\huaH^{i-1}(G, V)@>\frkk^{i-1}>>\huaH^{i}(\huaD, T)@>\frak{i}_*>>\huaH^{i}(G, \huaD, V, T)@>\frak{p}_*>>\huaH^{i}(G, V)            @>\frkk^{i}>>\huaH^{i+1}(\huaD, T)\\
@V \mathrm{VE}_{i-1*} VV    @V \mathrm{VE}_{i-1*} VV   @V\widetilde{\mathrm{VE}}_{i*}VV  @V \mathrm{VE}_{i*} VV    @V \mathrm{VE}_{i+1*} VV\\
H^{i-1}(\g, V)@>k^{i-1}>>H^{i}(D, T) @>\iota_*>>H^{i}(\g, D, V, T)@>p_*>>H^{i}(\g, V)             @>k^{i}>>H^{i+1}(D, T)
.
\end{CD}\]
Since $G$ is connected and its homotopy groups are trivial in $1, \cdots, n$, for $1\leq i\leq n$, by Theorem \ref{van}, we have the following group isomorphism $\huaH^{i}(\huaD, T)\simeq H^{i}(D, T), \huaH^{i+1}(\huaD, T)\simeq H^{i+1}(D, T)$ and $\huaH^{i-1}(G, V)\simeq H^{i-1}(\g, V), \huaH^{i}(G, V)\simeq H^{i}(\g, V)$. Apply the Five Lemma to the above diagram, we have
$$\huaH^{i}(G, \huaD, V, T)\simeq H^{i}(\g, D, V, T), \quad \forall 1\leq i\leq n.$$
Thus, for $1\leq i\leq n$, the cohomology group $\huaH^{i}(G, \huaD, V, T)$ is isomorphic to the cohomology group $H^{i}(\g, D, V, T)$.
\end{proof}

\section{Abelian extensions of difference Lie groups}\label{sec:ext}

In this section we use the established cohomology theory to study abelian extensions of difference Lie groups, and show that abelian extensions of difference Lie groups relative to a fix representation are classified by the second cohomology group.

\begin{defi}
Let $(G, \huaD)$ be a difference Lie group and $(V, T)$ be a vector space with a linear map $T: V\lon V$. An {\bf abelian extension} of $(G, \huaD)$ by $(V, T)$ is a short exact sequence of difference Lie group homomorphisms:
\[\begin{CD}
\{1\}@>>>V@>i>>\Pi @>p>>G            @>>>\{1\}\\
@.    @V T VV   @V\huaD_\Pi VV  @V \huaD VV    @.\\
\{1\}@>>>V @>i>>\Pi @>p >>G             @>>>\{1\}
,
\end{CD}\]
where $(\Pi, \huaD_{\Pi})$ is a difference Lie group.
\end{defi}

\begin{defi}
A {\bf section} of an abelian extension $(\Pi, \huaD_\Pi)$ of a difference Lie group $(G, \huaD)$ by $(V, T)$ is a smooth map $s: G\lon \Pi$ such that
\begin{equation*}
p\circ s=\Id, \quad s(e_G)=e_\Pi.
\end{equation*}
\end{defi}

Let $s$ be a section. Define a smooth map $\Theta: G\lon GL(V)$ by
\begin{equation*}
\Theta(g)u=s(g)\cdot_\Pi u\cdot_\Pi (s(g))^{-1}, \quad \forall g\in G, u\in V.
\end{equation*}
Then we have the following proposition.
\begin{pro}\label{pro-repn}
With the above notations, $\Theta: G\lon GL(V)$ is a representation of the difference Lie group $(G, \huaD)$ on $V$ with respect to the linear map $T$. Moreover, this representation is independent on the choice of sections.
\end{pro}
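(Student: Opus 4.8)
The plan is to verify the two assertions separately: first that $\Theta$ takes values in $GL(V)$ and is a Lie group homomorphism satisfying the crossed-homomorphism condition \eqref{repofG}, and second that $\Theta$ does not depend on the chosen section $s$.

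First I would check that $\Theta(g)$ is a well-defined linear automorphism of $V$. Since $V$ is abelian and normal in $\Pi$ (identified with its image under $i$), conjugation $u\mapsto s(g)\cdot_\Pi u\cdot_\Pi (s(g))^{-1}$ lands back in $V$; linearity follows because $V$ is a vector space and conjugation by a fixed element of a Lie group acts on the normal abelian subgroup by a linear map, while invertibility is clear with inverse given by conjugation by $s(g)^{-1}$. The homomorphism property $\Theta(g_1 g_2)=\Theta(g_1)\Theta(g_2)$ is where a $2$-cochain enters: writing $s(g_1)\cdot_\Pi s(g_2)=\omega(g_1,g_2)\cdot_\Pi s(g_1 g_2)$ for a suitable $\omega(g_1,g_2)\in V$ (using $p\circ s=\Id$), I compute $\Theta(g_1 g_2)u=s(g_1 g_2)u s(g_1 g_2)^{-1}$ and compare with $\Theta(g_1)\Theta(g_2)u=s(g_1)s(g_2)u s(g_2)^{-1}s(g_1)^{-1}$; the extra factors $\omega(g_1,g_2)$ commute with $u\in V$ because $V$ is abelian, so they cancel, giving $\Theta(g_1 g_2)=\Theta(g_1)\Theta(g_2)$. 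Smoothness is inherited from smoothness of $s$ and of the group operations.

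Next I would establish \eqref{repofG}, which is the genuinely new part involving $T$ and $\huaD_\Pi$. The key input is the commutativity of the extension diagram: $\huaD_\Pi\circ i = i\circ T$ and $p\circ\huaD_\Pi=\huaD\circ p$. From the latter, $\huaD_\Pi(s(g))$ projects to $\huaD(g)$, so $\huaD_\Pi(s(g))\cdot_\Pi s(g)$ projects to $\huaD(g)g$, hence equals $c(g)\cdot_\Pi s(\huaD(g)g)$ for some $c(g)\in V$. Now for $u\in V$ I evaluate $\huaD_\Pi$ on the product $s(g)\cdot_\Pi u$ using the difference-operator identity \eqref{defiD} in $\Pi$, namely $\huaD_\Pi(s(g)\cdot_\Pi u)=\huaD_\Pi(s(g))\cdot_\Pi s(g)\cdot_\Pi \huaD_\Pi(u)\cdot_\Pi s(g)^{-1}$; since $\huaD_\Pi(u)=T(u)$ by the diagram, the right-hand side becomes $\big(\huaD_\Pi(s(g))\cdot_\Pi s(g)\big)\cdot_\Pi T(u)\cdot_\Pi\big(\huaD_\Pi(s(g))\cdot_\Pi s(g)\big)^{-1}$ conjugating $T(u)$, after absorbing $s(g)\cdot_\Pi s(g)^{-1}$. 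Because $\huaD_\Pi(s(g))\cdot_\Pi s(g)$ and $s(\huaD(g)g)$ differ by the central-in-$V$ element $c(g)$, conjugation by either gives the same linear map on $V$, namely $\Theta(\huaD(g)g)$; thus $\huaD_\Pi(s(g)\cdot_\Pi u)=\Theta(\huaD(g)g)T(u)$ viewed in $V$. On the other hand I can also write $s(g)\cdot_\Pi u = \big(s(g)\cdot_\Pi u\cdot_\Pi s(g)^{-1}\big)\cdot_\Pi s(g) = \Theta(g)u\cdot_\Pi s(g)$ and apply \eqref{defiD} the other way: $\huaD_\Pi(\Theta(g)u\cdot_\Pi s(g))=\huaD_\Pi(\Theta(g)u)\cdot_\Pi\Theta(g)u\cdot_\Pi\huaD_\Pi(s(g))\cdot_\Pi(\Theta(g)u)^{-1}$, and since $\Theta(g)u\in V$ this yields $T(\Theta(g)u)+\Theta(g)u+\big(\text{the }V\text{-part of }\huaD_\Pi(s(g))\big)$ up to conjugation; comparing the two expressions for $\huaD_\Pi(s(g)\cdot_\Pi u)$, the contributions independent of $u$ cancel (they agree by taking $u=0$), and the remaining identity is exactly $T(\Theta(g)u)+\Theta(g)u=\Theta(\huaD(g)g)T(u)+\Theta(\huaD(g)g)u$, which is \eqref{repofG}. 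I expect this bookkeeping — carefully tracking which factors lie in the abelian $V$ and hence commute, and which cancel upon conjugation — to be the main obstacle; it is not deep but requires discipline.

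Finally, for independence of the section, I take two sections $s, s'$; since $p\circ s=p\circ s'=\Id$, they differ by a smooth map $\lambda:G\to V$ with $s'(g)=\lambda(g)\cdot_\Pi s(g)$. Then $\Theta'(g)u = s'(g)u s'(g)^{-1} = \lambda(g)\cdot_\Pi s(g)\cdot_\Pi u\cdot_\Pi s(g)^{-1}\cdot_\Pi\lambda(g)^{-1}$, and because $V$ is abelian the outer $\lambda(g)^{\pm 1}$ commute with $s(g)u s(g)^{-1}\in V$ and cancel, giving $\Theta'(g)=\Theta(g)$. Hence $\Theta$ is independent of the choice of section, completing the proof.
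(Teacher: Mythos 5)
Your proposal is correct and rests on the same ingredients as the paper's own proof: $V$ is abelian, so conjugation by elements of $V$ (in particular by $s'(g)^{-1}\cdot_\Pi s(g)$ or by your $2$-cochain $\omega$) acts trivially on $V$; $\huaD_\Pi$ restricts to $T$ on $V$; and elements of $\Pi$ with the same image under $p$ induce the same conjugation action on $V$. The only organizational difference is that the paper expands $\huaD_\Pi\big(s(g)\cdot_\Pi u\cdot_\Pi (s(g))^{-1}\big)$ directly using the inverse formula \eqref{defiinverse}, whereas you compute $\huaD_\Pi(s(g)\cdot_\Pi u)$ in two ways and compare $V$-components --- this works, but note that $\huaD_\Pi(s(g))\cdot_\Pi s(g)\cdot_\Pi T(u)\cdot_\Pi (s(g))^{-1}$ is not literally the conjugate of $T(u)$ by $\huaD_\Pi(s(g))\cdot_\Pi s(g)$; it is that conjugate multiplied on the right by $\huaD_\Pi(s(g))$, which is precisely the $u$-independent residue that your final comparison cancels.
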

\begin{proof}
For any $g, h\in G$ and $u\in V$, since $V$ is a vector space, we have
\begin{eqnarray*}
\Theta(g\cdot_Gh)u&=&s(g\cdot_Gh)\cdot_\Pi u\cdot_\Pi (s(g\cdot_Gh))^{-1}\\
&=&s(g\cdot_Gh)\cdot_\Pi(s(g)\cdot_\Pi s(h))^{-1}+s(g)\cdot_\Pi s(h)\cdot_\Pi u\cdot_\Pi (s(h))^{-1}\cdot_\Pi (s(g))^{-1}\\
&&-(s(g\cdot_Gh)\cdot_\Pi(s(g)\cdot_\Pi s(h))^{-1})\\
&=&s(g)\cdot_\Pi s(h)\cdot_\Pi u\cdot_\Pi (s(h))^{-1}\cdot_\Pi (s(g))^{-1}\\
&=&\Theta(g)\Theta(h)u.
\end{eqnarray*}
Thus, $\Theta$ is a representation of the Lie group $G$ on the vector space $V$.

 Since $V$ is a vector space which is an abelian Lie group, we have $a\cdot_\Pi u\cdot_\Pi a^{-1}=b\cdot_\Pi u\cdot_\Pi b^{-1}$, where   $a, b\in\Pi$ such that $p(a)=p(b)$ and $u\in V$. For any $g\in G$, by the fact that $p(\huaD_\Pi(s(g))\cdot_\Pi s(g))=\huaD(g)\cdot_G g=p(s(\huaD(g)\cdot_G g))$ and $\huaD((s(g))^{-1})=(s(g))^{-1}\cdot_\Pi(\huaD(s(g)))^{-1}\cdot_\Pi s(g)$, we have
\begin{eqnarray*}
&&T(\Theta(g)u)+\Theta(g)u\\
&=&\huaD_{\Pi}(s(g)\cdot_\Pi u\cdot_\Pi (s(g)^{-1}))+s(g)\cdot_\Pi u\cdot_\Pi (s(g))^{-1}\\
&=&\huaD_{\Pi}(s(g))\cdot_\Pi s(g)\cdot_\Pi\huaD_\Pi(u)\cdot_\Pi u\cdot_\Pi\huaD((s(g))^{-1})\cdot_\Pi u^{-1}\cdot_\Pi (s(g))^{-1}+s(g)\cdot_\Pi u\cdot_\Pi (s(g))^{-1}\\
&=&(\huaD_{\Pi}(s(g))\cdot_\Pi s(g))\cdot_\Pi\huaD_\Pi(u)\cdot_\Pi(\huaD_{\Pi}(s(g))\cdot_\Pi s(g))^{-1}\\
&&+(\huaD_{\Pi}(s(g))\cdot_\Pi s(g))\cdot_\Pi u\cdot_\Pi(\huaD_{\Pi}(s(g))\cdot_\Pi s(g))^{-1}-s(g)\cdot_\Pi u\cdot_\Pi (s(g))^{-1}+s(g)\cdot_\Pi u\cdot_\Pi (s(g))^{-1}\\
&=&s(\huaD(g)\cdot_G g)\cdot_\Pi\huaD_\Pi(u)\cdot_\Pi(s(\huaD(g)\cdot_G g))^{-1}+s(\huaD(g)\cdot_G g)\cdot_\Pi u\cdot_\Pi(s(\huaD(g)\cdot_G g))^{-1}\\
&=&\Theta(\huaD(g)\cdot_G g)T(u)+\Theta(\huaD(g)\cdot_G g)u.
\end{eqnarray*}
Thus, $(V, T, \Theta)$ is a representation of the difference Lie group $(G, \huaD)$.

Let $s'$ be another section and $\Theta'$ be the corresponding representation of the difference Lie group $(G, \huaD)$. Since $(s'(g))^{-1}\cdot_\Pi s(g)\in V$, it follows  that $$((s'(g))^{-1}\cdot_\Pi s(g))\cdot_\Pi u\cdot_\Pi ((s'(g))^{-1}\cdot_\Pi s(g))^{-1}=u.$$ Thus, the representation $\Theta$ is independent on the choice of sections.
\end{proof}

The above result tells us that any abelian extension   $(\Pi, \huaD_\Pi)$ of a difference Lie group $(G, \huaD)$ by $(V, T)$ determines a representation $\Theta$ of the difference Lie group $(G, \huaD)$ on $V$ with respect to $T$. We will say that the abelian extension $(\Pi, \huaD_\Pi)$ is {\bf relative to the representation} $(V, T, \Theta)$ of the difference Lie group $(G, \huaD)$.

Let $s$ be a section. Define $\alpha\in C^{2}(G, V)$ and $ \beta\in\frkC^{2}(\huaD, T)$ by
\begin{eqnarray}
\label{2-cocycle}\alpha(g, h)&=&s(g)\cdot_\Pi s(h)\cdot_\Pi (s(g\cdot_G h))^{-1},\\
\label{2-cocycle'}\beta(g)&=&\huaD_\Pi(s(g))\cdot_\Pi (s(\huaD(g)))^{-1}.
\end{eqnarray}
Define $S: G\times V\lon \Pi$ by
\begin{equation*}
S(g, u)=u\cdot_\Pi s(g).
\end{equation*}
It is obvious that $S$ is an isomorphism between manifolds. Transfer the difference Lie group structure on $\Pi$ to $G\times V$ via the isomorphism $S$, we obtain a difference Lie group $(G\times V, \cdot_{\alpha}, \huaD_{\beta})$, where $\cdot_{\alpha}$ and $\huaD_{\beta}$ are given by
\begin{eqnarray*}
(g, u)\cdot_{\alpha}(h, v)&=&S^{-1}(u\cdot_\Pi s(g)\cdot_\Pi v\cdot_\Pi s(h))=(g\cdot_G h, u+\Theta(g)v+\alpha(g, h)),\\
\huaD_{\beta}(g, u)&=&S^{-1}(\huaD_{\Pi}(u\cdot_\Pi s(g)))=(\huaD(g), T(u)+u-\Theta(\huaD(g))u+\beta(g)).
\end{eqnarray*}

\begin{thm}\label{thmabeH2}
With the above notations, $(\alpha, \beta)$ is a $2$-cocycle of the difference Lie group $(G, \huaD)$ with coefficients in the representation $(V, T, \Theta)$. Moreover, its cohomological class does not depend on the choice of sections.
\end{thm}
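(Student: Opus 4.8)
The plan is to derive the two cocycle identities directly from the fact that, through the diffeomorphism $S$, the difference Lie group $(\Pi,\huaD_\Pi)$ is transported onto $(G\times V,\cdot_\alpha,\huaD_\beta)$, whose structure maps have just been computed. By \eqref{defidgco}, saying $(\alpha,\beta)\in C^{2}(G,V)\oplus\frkC^{2}(\huaD,T)$ is a $2$-cocycle means precisely that $\dM^{\Theta}\alpha=0$ and $\dTD\beta+\frkK(\alpha)=0$. First one records that $\alpha$ and $\beta$ are normalized: since $s(e_G)=e_\Pi$, one gets $\alpha(e_G,h)=\alpha(g,e_G)=0$ and $\beta(e_G)=0$, so indeed $\alpha\in C^{2}(G,V)$ and $\beta\in\frkC^{2}(\huaD,T)=C^{1}(G,V)$. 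Next, $\cdot_\alpha$ is the group law of $\Pi$ transported, hence associative; expanding the two bracketings of $(g,u)\cdot_\alpha(h,v)\cdot_\alpha(k,w)$, the $G$-components agree automatically and the $V$-component of the resulting equation is exactly $\Theta(g)\alpha(h,k)-\alpha(gh,k)+\alpha(g,hk)-\alpha(g,h)=0$, i.e.\ $\dM^{\Theta}\alpha=0$ — the standard statement that the factor system of a group extension is a $2$-cocycle.

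For the second identity, $\huaD_\beta$ satisfies the defining equation \eqref{defiD} relative to $\cdot_\alpha$. I would expand $\huaD_\beta\big((g,u)\cdot_\alpha(h,v)\big)\cdot_\alpha(g,u)$ and $\huaD_\beta(g,u)\cdot_\alpha(g,u)\cdot_\alpha\huaD_\beta(h,v)$ using the explicit formulas for $\cdot_\alpha$ and $\huaD_\beta$, exactly paralleling the proof of Theorem \ref{th:semidirect} (multiplying both sides of \eqref{defiD} by $(g,u)$ on the right to avoid the inverse). The $G$-components of the two expressions coincide by \eqref{defiD} for $\huaD$ itself, so the content is equality of their $V$-components; sorting the terms of that identity, the summands carrying $\beta$ assemble into $\dTD\beta(g,h)$ (the coboundary of $\beta$ for the representation $\Theta_\huaD$), while the summands carrying $\alpha$, together with the $T$- and identity-terms, assemble into $\pk(\alpha)(g,h)+\hk(\alpha)(g,h)=\frkK(\alpha)(g,h)$, the relation \eqref{repofG} and the already-proved $\dM^{\Theta}\alpha=0$ being used to cancel the residual cross terms (these are precisely the cancellations displayed in the proof of Theorem \ref{th:semidirect}). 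The outcome is $\dTD\beta+\frkK(\alpha)=0$, so $\delta(\alpha,\beta)=0$ and $(\alpha,\beta)$ is a $2$-cocycle. I expect this term-by-term bookkeeping to be the main obstacle, in particular pinning down where \eqref{repofG} and $\dM^{\Theta}\alpha=0$ must be invoked to reshape the surplus terms into the prescribed form of $\pk$ and $\hk$.

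For independence of the section, let $s'$ be a second section with associated $\alpha',\beta'$ defined by \eqref{2-cocycle} and \eqref{2-cocycle'}; by Proposition \ref{pro-repn} it induces the same representation $\Theta$. Put $\gamma(g)=s(g)\cdot_\Pi(s'(g))^{-1}$; since $p\circ s=p\circ s'=\Id$, this lies in the kernel of $p$, i.e.\ $\gamma(g)\in V$, and $\gamma(e_G)=0$, so $\gamma\in C^{1}(G,V)=C^{1}(G,\huaD,V,T)$. Writing $s(g)=\gamma(g)\cdot_\Pi s'(g)$, substituting into \eqref{2-cocycle}, and using that conjugation of $V$ by any lift of $g$ equals $\Theta(g)$ (as in the proof of Proposition \ref{pro-repn}), one gets $\alpha-\alpha'=\dM^{\Theta}\gamma$. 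Substituting the same relation into \eqref{2-cocycle'} and using, in turn, the identity \eqref{defiD} for $\huaD_\Pi$ to expand $\huaD_\Pi(\gamma(g)\cdot_\Pi s'(g))$, the fact that $\huaD_\Pi$ restricts to $T$ on $V$ (the left square of the extension diagram), the intertwining $p\circ\huaD_\Pi=\huaD\circ p$, and again the conjugation fact, one obtains
\[
\beta(g)-\beta'(g)=T(\gamma(g))+\gamma(g)-\Theta(\huaD(g))\gamma(g)-\gamma(\huaD(g)),
\]
which is exactly $\frkK(\gamma)(g)$, as one checks by computing $\pk(\gamma)+\hk(\gamma)$ in degree $1$ from \eqref{defisalpha} and \eqref{defi-coht}. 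Hence $(\alpha,\beta)-(\alpha',\beta')=(\dM^{\Theta}\gamma,\frkK(\gamma))=\delta(\gamma)$, so the two $2$-cocycles are cohomologous and the class $[(\alpha,\beta)]\in\huaH^{2}(G,\huaD,V,T)$ is independent of the chosen section.
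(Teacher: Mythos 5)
Your proposal is correct and follows essentially the same route as the paper's proof: transporting the structure to $(G\times V,\cdot_\alpha,\huaD_\beta)$, reading off $\dM^{\Theta}\alpha=0$ from the group law, extracting $\dTD\beta+\frkK(\alpha)=0$ from the $V$-component of the difference-operator identity for $\huaD_\beta$ (with \eqref{repofG} cancelling the $v$-terms), and showing a change of section alters $(\alpha,\beta)$ by $\delta(\eta)=(\dM^{\Theta}\eta,\frkK(\eta))$. The only cosmetic differences are that the paper multiplies \eqref{defiD} on the right by $(g,u)\cdot_\alpha(h,v)$ rather than by $(g,u)$, does not need $\dM^{\Theta}\alpha=0$ in deriving the second identity, and writes the section change as $s'=\eta\cdot_\Pi s$ instead of $s=\gamma\cdot_\Pi s'$.
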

\begin{proof}
By the fact that $\cdot_{\alpha}$ is a group multiplication, we deduce that $\alpha$ is $2$-cocycle of the Lie group $(G, \cdot_G)$ with coefficients in $(V, \Theta)$, i.e. $\dM^{\Theta}\alpha=0$.

Moreover, we have
\begin{eqnarray*}
&&\huaD_{\beta}\Big((g, u)\cdot_{\alpha} (h, v)\Big)\cdot_{\alpha}\Big((g, u)\cdot_{\alpha}(h, v)\Big)\\
&=&\huaD_{\beta}\Big((g\cdot_G h, u+\Theta(g)v+\alpha(g, h))\Big)\cdot_{\alpha}\Big(g\cdot_G h, u+\Theta(g)v+\alpha(g, h)\Big)\\
&=&\Big(\huaD(g\cdot_G h), T(u)+T(\Theta(g)v)+T(\alpha(g, h))+u+\Theta(g)v+\alpha(g, h)\\
&&-\Theta(\huaD(g\cdot_G h))(u+\Theta(g)v+\alpha(g, h))+\beta(g\cdot_G h)\Big)\cdot_{\alpha}\Big(g\cdot_G h, u+\Theta(g)v+\alpha(g, h)\Big)\\
&=&\Big(\huaD(g\cdot_G h)\cdot_G (g\cdot_G h), T(u)+T(\Theta(g)v)+T(\alpha(g, h))+u+\Theta(g)v+\alpha(g, h)\\
&&-\cancel{\Theta(\huaD(g\cdot_G h))(u+\Theta(g)v+\alpha(g, h))}+\beta(g\cdot_G h)\\
&&+\cancel{\Theta(\huaD(g\cdot_G h))(u+\Theta(g)v+\alpha(g, h))}+\alpha(\huaD(g\cdot_G h), g\cdot_G h)\Big)\\
&=&\Big(\huaD(g\cdot_G h)\cdot_G (g\cdot_G h), T(u)+T(\Theta(g)v)+T(\alpha(g, h))+u+\Theta(g)v+\alpha(g, h)\\
&&+\beta(g\cdot_G h)+\alpha(\huaD(g\cdot_G h), g\cdot_G h)\Big)
\end{eqnarray*}
and
\begin{eqnarray*}
&&\huaD_{\beta}(g, u)\cdot_{\alpha}(g, u)\cdot_{\alpha}\huaD_{\beta}(h, v)\cdot_{\alpha}(h, v)\\
&=&\Big(\huaD(g), T(u)+u-\Theta(\huaD(g))u+\beta(g)\Big)\cdot_{\alpha}(g, u)\cdot_{\alpha}\Big(\huaD(h), T(v)+v-\Theta(\huaD(h))v+\beta(h)\Big)\cdot_{\alpha}(h, v)\\
&=&\Big(\huaD(g)\cdot_G g, T(u)+u-\cancel{\Theta(\huaD(g))u}+\beta(g)+\cancel{\Theta(\huaD(g))u}+\alpha(\huaD(g), g)\Big)\\
&&\cdot_{\alpha}\Big(\huaD(h)\cdot_G h, T(v)+v-\cancel{\Theta(\huaD(h))v}+\beta(h)+\cancel{\Theta(\huaD(h))v}+\alpha(\huaD(h), h)\Big)\\
&=&\Big(\huaD(g)\cdot_G g\cdot_G\huaD(h)\cdot_G h, T(u)+u+\beta(g)+\alpha(\huaD(g), g)\\
&&+\Theta(\huaD(g)\cdot_G g)(T(v)+v+\beta(h)+\alpha(\huaD(h), h))+\alpha(\huaD(g)\cdot_G g, \huaD(h)\cdot_G h)\Big).
\end{eqnarray*}
Since $\huaD_{\beta}$ is a difference operator on the Lie group $(G\times V, \cdot_{\alpha})$, we have
\begin{eqnarray*}
0&=&-T(\alpha(g, h))-\alpha(g, h)-\beta(g\cdot_G h)-\alpha(\huaD(g\cdot_G h), g\cdot_G h)\\
&&+\beta(g)+\alpha(\huaD(g), g)+\Theta(\huaD(g)\cdot_G g)(\beta(h)+\alpha(\huaD(h), h))+\alpha(\huaD(g)\cdot_G g, \huaD(h)\cdot_G h)\\
&=&\dTD\beta(g, h)+\frkK(\alpha)(g, h),
\end{eqnarray*}
which implies that $\dTD\beta+\frkK(\alpha)=0$. Thus $\delta(\alpha, \beta)=0$, i.e. $(\alpha, \beta)$ is a $2$-cocycle.

Let $s'$ be another section and $(\alpha', \beta')$ the associated $2$-cocycle. Assume that $s'(g)=\eta(g)\cdot_\Pi s(g)$ for $\eta\in C^{1}(G, V)$. Then we have
\begin{eqnarray*}
\alpha'(g, h)-\alpha(g, h)&=&s'(g)\cdot_\Pi s'(h)\cdot_\Pi (s'(g\cdot_G h))^{-1}-s(g)\cdot_\Pi s(h)\cdot_\Pi (s(g\cdot_G h))^{-1}\\
&=&\eta(g)\cdot_\Pi s(g)\cdot_\Pi \eta(h)\cdot_\Pi s(h)\cdot_\Pi(s(g\cdot_G h))^{-1}\cdot_\Pi (\eta(g\cdot_G h))^{-1}\\
&&-s(g)\cdot_\Pi s(h)\cdot_\Pi (s(g\cdot_G h))^{-1}\\
&=&\eta(g)+s(g)\cdot_\Pi\eta(h)\cdot_\Pi(s(g))^{-1}+\cancel{(s(g)\cdot_\Pi s(h))\cdot_\Pi(s(g\cdot_\Pi h))^{-1}}\\
&&-\eta(g\cdot_G h)-\cancel{s(g)\cdot_\Pi s(h)\cdot_\Pi (s(g\cdot_G h))^{-1}}\\
&=&\eta(g)+\Theta(g)\eta(h)-\eta(g\cdot_G h)\\
&=&\dM^{\Theta}\eta(g, h),\\
\beta'(g)-\beta(g)&=&\huaD_\Pi(s'(g))\cdot_\Pi (s'(\huaD(g)))^{-1}-\huaD_\Pi(s(g))\cdot_\Pi (s(\huaD(g)))^{-1}\\
&=&\huaD_\Pi(\eta(g))\cdot_\Pi\eta(g)\cdot_\Pi\huaD_\Pi(s(g))\cdot_\Pi(\eta(g))^{-1}\cdot_\Pi(s(\huaD(g))^{-1}(\eta(\huaD(g)))^{-1}\\
&&-\huaD_\Pi(s(g))\cdot_\Pi (s(\huaD(g)))^{-1}\\
&=&T(\eta(g))+\eta(g)-\Theta(\huaD(g))\eta(g)-\eta(\huaD(g))\\
&=&\frkK(\eta)(g),
\end{eqnarray*}
which implies that $(\alpha', \beta')-(\alpha, \beta)=\delta(\eta)$. Thus, $(\alpha', \beta')$ and $(\alpha, \beta)$ are in the same cohomology class.
\end{proof}

\begin{defi}
Let $(\hat{\Pi}, \hat{\huaD}_{\hat{\Pi}})$ and $(\Pi, \huaD_\Pi)$ be two abelian extensions of a difference Lie group $(G, \huaD)$ by $(V, T)$. They are said to be {\bf isomorphic} if there exists an isomorphism $\sigma: \Pi\lon\hat{\Pi}$ of difference Lie groups such that $\sigma\circ i=\hat{i}$ and $\hat{p}\circ\sigma=p$.
\end{defi}

\begin{thm}\label{th:abel-ext}
For a given representation $(V, T, \Theta)$ of a difference Lie group $(G, \huaD)$, abelian extensions of $(G, \huaD)$ by $(V, T)$ relative to the representation $(V, T, \Theta)$ are classified by the second cohomology group $\huaH^{2}(G, \huaD, V, T)$.
\end{thm}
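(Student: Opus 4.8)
I will set up a bijection
$$
\Phi\colon \big\{\text{abelian extensions of }(G,\huaD)\text{ by }(V,T)\text{ relative to }(V,T,\Theta)\big\}\big/\!\simeq \ \longrightarrow\ \huaH^{2}(G,\huaD,V,T),
$$
built from the cocycle construction already set up just before Theorem \ref{thmabeH2}, and then show it is well defined and invertible. Concretely: an extension together with a section produces the pair $(\alpha,\beta)$ of \eqref{2-cocycle}--\eqref{2-cocycle'}, which by Theorem \ref{thmabeH2} is a $2$-cocycle whose class is section-independent; conversely a $2$-cocycle produces the difference Lie group $(G\times V,\cdot_{\alpha},\huaD_{\beta})$ written down in the text, which is an abelian extension.

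\textbf{Step 1: $\Phi$ is well defined on isomorphism classes.} Let $(\Pi,\huaD_\Pi)$ be an abelian extension relative to $(V,T,\Theta)$ and $s$ a section; by Theorem \ref{thmabeH2} the class $[(\alpha,\beta)]\in\huaH^{2}(G,\huaD,V,T)$ does not depend on $s$. If $\sigma\colon\Pi\to\hat\Pi$ is an isomorphism of difference Lie groups with $\sigma\circ i=\hat i$ and $\hat p\circ\sigma=p$, then $\hat s:=\sigma\circ s$ is a section of $\hat\Pi$ (it is smooth, $\hat p\circ\hat s=\Id$, $\hat s(e_G)=e_{\hat\Pi}$). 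Since $\sigma$ is a group homomorphism commuting with $\huaD_\Pi,\hat{\huaD}_{\hat\Pi}$ and restricts to the identity on $V$, applying $\sigma$ to \eqref{2-cocycle}--\eqref{2-cocycle'} gives $\hat\alpha=\alpha$, $\hat\beta=\beta$; hence $\Phi$ is constant on the isomorphism class.

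\textbf{Step 2: construction of the inverse.} Given a $2$-cocycle $(\alpha,\beta)$, i.e. $\dM^{\Theta}\alpha=0$ and $\dTD\beta+\frkK(\alpha)=0$, set $\Pi_{(\alpha,\beta)}:=G\times V$ with the operations $\cdot_{\alpha}$ and $\huaD_{\beta}$ displayed before Theorem \ref{thmabeH2}. The identity $\dM^{\Theta}\alpha=0$ together with the normalization of $\alpha$ makes $(G\times V,\cdot_{\alpha})$ a Lie group (the classical construction of group extensions), and $i(v)=(e_G,v)$, $p(g,u)=g$ realize it as an abelian extension of $G$ by $V$; using the section $s_0(g)=(g,0)$ one checks $\alpha(g,e_G)=\alpha(e_G,g)=0$ forces the associated representation to be exactly $\Theta$. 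That $\huaD_{\beta}$ satisfies \eqref{defiD} on $(G\times V,\cdot_{\alpha})$, and that $\huaD_{\beta}\circ i=i\circ T$ and $p\circ\huaD_{\beta}=\huaD\circ p$, is precisely the computation in the proof of Theorem \ref{thmabeH2} read backwards, now using $\dTD\beta+\frkK(\alpha)=0$ as the hypothesis rather than the conclusion. Thus $(\Pi_{(\alpha,\beta)},\huaD_{\beta})$ is an abelian extension relative to $(V,T,\Theta)$, and computing its cocycle via $s_0$ returns $(\alpha,\beta)$; so $\Phi$ is surjective.

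\textbf{Step 3: injectivity.} Suppose $(\alpha',\beta')-(\alpha,\beta)=\delta\eta$ for some $\eta\in C^{1}(G,V)$, that is $\alpha'-\alpha=\dM^{\Theta}\eta$ and $\beta'-\beta=\frkK(\eta)$ (the computation at the end of the proof of Theorem \ref{thmabeH2}). Then $\sigma\colon\Pi_{(\alpha,\beta)}\to\Pi_{(\alpha',\beta')}$, $\sigma(g,u)=(g,u-\eta(g))$, is an isomorphism of difference Lie groups commuting with $i$ and $p$: the first relation says $\sigma$ respects $\cdot_{\alpha},\cdot_{\alpha'}$, the second says $\sigma$ intertwines $\huaD_{\beta}$ and $\huaD_{\beta'}$. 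Conversely, for an arbitrary extension $(\Pi,\huaD_\Pi)$ with section $s$, the map $S(g,u)=u\cdot_\Pi s(g)$ exhibited in the text is, by construction, an isomorphism of difference Lie groups $\Pi_{(\alpha,\beta)}\xrightarrow{\ \sim\ }\Pi$ compatible with $i,p$, so every extension is isomorphic to one coming from a cocycle. Together with Steps 1--2 this shows $\Phi$ is a bijection.

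\textbf{Expected main obstacle.} The only computationally heavy point — that $\huaD_{\beta}$ obeys the difference-operator axiom on $(G\times V,\cdot_{\alpha})$ exactly when $\dTD\beta+\frkK(\alpha)=0$ — has in effect already been carried out inside the proof of Theorem \ref{thmabeH2}, so it creates no new difficulty here. What remains is bookkeeping: verifying that the assignments of Steps 1--3 are mutually inverse and that the maps $\sigma$ and $S$ genuinely intertwine all the relevant structures and fit into the commuting diagrams defining isomorphic extensions. I expect this to be routine.
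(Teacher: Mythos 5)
Your proposal is correct and follows essentially the same route as the paper: extract the $2$-cocycle $(\alpha,\beta)$ from a section, invoke Theorem \ref{thmabeH2} for section-independence, show isomorphic extensions yield equal cocycles via $s'=\sigma\circ s$, reconstruct $(G\times V,\cdot_\alpha,\huaD_\beta)$ from a cocycle, and realize cohomologous cocycles as isomorphic extensions via $\sigma(g,u)=(g,u\mp\eta(g))$ (your sign convention is the mirror image of the paper's, which is immaterial). Your added check that the reconstructed extension induces exactly the representation $\Theta$ is a small point the paper leaves implicit, but the argument is otherwise the same.
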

\begin{proof}
Let $(\hat{\Pi}, \hat{\huaD}_{\hat{\Pi}})$ and $(\Pi, \huaD_\Pi)$ be two isomorphic abelian extensions via an isomorphism $\sigma$. Assume that $s$ is a section of $(\Pi, \huaD)$. Define $s'$ by
$$ s'=\sigma\circ s.$$
Then, it is obvious that $s'$ is a section of $(\hat{\Pi}, \hat{\huaD}_{\hat{\Pi}})$.
Note that any two isomorphic abelian extensions $(\hat{\Pi}, \hat{\huaD}_{\hat{\Pi}})$ and $(\Pi, \huaD_\Pi)$ of $(G,\huaD)$ by $(V,T)$ are relative to the same representation $(V,T,\Theta)$ of $(G,\huaD)$. Actually, for any $g\in G,\,u\in V$, we have
\begin{eqnarray*}
s(g)\cdot_{\Pi} u \cdot_{\Pi} (s(g))^{-1} &=& \sigma(s(g)\cdot_{\Pi} u \cdot_{\Pi} (s(g))^{-1})\\
&=& \sigma(s(g))\cdot_{\hat{\Pi}} \sigma(u) \cdot_{\hat{\Pi}} (\sigma(s(g)))^{-1}\\
&=& s'(g)\cdot_{\hat{\Pi}} u \cdot_{\hat{\Pi}} (s'(g))^{-1},
\end{eqnarray*}
which implies that the  representation $(V,T,\Theta)$ given in Proposition \ref{pro-repn} are the same.

Denote by $(\alpha, \beta)$  and $(\alpha', \beta')$ the corresponding $2$-cocycle given in Theorem \ref{thmabeH2} respectively. Then we have
\begin{eqnarray*}
\alpha'(g, h)&=&s'(g)\cdot_{\hat{\Pi}}s'(h)\cdot_{\hat{\Pi}}(s'(g\cdot_{\hat{\Pi}} h))^{-1}\\
&=&\sigma(s(g))\cdot_{\hat{\Pi}}\sigma(s(h))\cdot_{\hat{\Pi}}(\sigma(s(g\cdot_{\hat{\Pi}} h)))^{-1}\\
&=&\sigma(\alpha(g, h))\\
&=&\alpha(g, h).
\end{eqnarray*}
Similarly, we have $\beta'=\beta$. By Theorem \ref{thmabeH2}, isomorphic abelian extensions give rise to the same cohomological class in $\huaH^{2}(G, \huaD, V, T)$.

Conversely, given a $2$-cocycle $(\alpha, \beta)$,  we define a group multiplication $\cdot_\alpha$ on $G\times V$ by
\begin{equation*}
(g, u)\cdot_{\alpha}(h, v)=(g\cdot_G h, u+\Theta(g)v+\alpha(g, h)),\quad \forall g, h\in G, u, v\in V.
\end{equation*}
By $\dM^{\Theta}\alpha=0$, it is straightforward to deduce that $(G\times V, \cdot_{\alpha})$ is a Lie group. Define a smooth map $\huaD_{\beta}: G\times V\lon G\times V$ by
\begin{equation}\label{eq:db}
\huaD_{\beta}(g, u)=(\huaD(g), T(u)+u-\Theta(\huaD(g))u+\beta(g)), \quad \forall g\in G, u\in V.
\end{equation}
Since $\dTD\beta+\frkK(\alpha)=0$, it is straightforward to deduce that $\huaD_{\beta}$ is a difference operator on the difference Lie group $(G\times V, \cdot_{\alpha})$. Thus $(G\times V, \cdot_{\alpha}, \huaD_{\beta})$ is a difference Lie group, which is an abelian extension of $(G, \huaD)$ by $(V, T)$.

Choose another $2$-cocycle $(\alpha', \beta')$, such that $(\alpha, \beta)$ and $(\alpha', \beta')$ are in the same cohomology class, i.e.
$$(\alpha-\alpha', \beta-\beta')=\delta(\eta)=(\dM^{\Theta}\eta,\frkK(\eta)), $$
where $\eta\in C^{1}(G, V)$, and denote the corresponding difference Lie group by $(G\times V, \cdot_{\alpha'}, \huaD_{\beta'})$. Define a smooth map $\sigma:G\times V\lon G\times V$ by
$$
\sigma(g, u)=(g, u+\eta(g)),
$$
for all $g\in G, u\in V$. Since
\begin{eqnarray*}
\sigma((g, u)\cdot_{\alpha}(h, v))&=&\sigma(g\cdot_G h, u+\Theta(g)v+\alpha(g, h))\\
&=&(g\cdot_G h, u+\Theta(g)v+\alpha(g, h)+\eta(g\cdot_G h))
\end{eqnarray*}
and
\begin{eqnarray*}
\sigma((g, u))\cdot_{\alpha'}\sigma((h, v))&=&(g, u+\eta(g))\cdot_{\alpha'}(h, v+\eta(h))\\
&=&(g\cdot_G h, u+\eta(g)+\Theta(g)v+\Theta(g)\eta(h)+\alpha'(g, h)),
\end{eqnarray*}
we deduce that $\sigma$ is a Lie group isomorphism. Moreover, we have
\begin{eqnarray*}
\huaD_{\beta'}(\sigma(g, u))&=&\huaD_{\beta'}(g, u+\eta(g))\\
&=&(\huaD(g), T(u)+T(\eta(g))+u+\eta(g)-\Theta(\huaD(g))u-\Theta(\huaD(g))\eta(g)+\beta'(g)),
\end{eqnarray*}
and
\begin{eqnarray*}
\sigma(\huaD_{\beta}(g, u))&=&\sigma(\huaD(g), T(u)+u-\Theta(\huaD(g))u+\beta(g))\\
&=&(\huaD(g), T(u)+u-\Theta(\huaD(g))u+\beta(g)+\eta(\huaD(g))).
\end{eqnarray*}
Since $\beta(g)-\beta'(g)=T(\eta(g))+\eta(g)-\Theta(\huaD(g))\eta(g)-\eta(\huaD(g))$, we have $\huaD_{\beta'}\circ\sigma=\sigma\circ\huaD_{\beta}$. Therefore, $\sigma$ is a difference Lie group isomorphism. Then it is straightforward to deduce that the two abelian extensions are isomorphic.
\end{proof}

  Note that for any
abelian extension $(\Pi,\huaD_\Pi)$ of $(G, \huaD)$ by $(V, T)$ relative to the representation $(V, T, \Theta)$, the Lie group $\Pi$ is isomorphic to the semidirect product Lie group $G\ltimes_\Theta V$, if and only if there exists a section $s$ of $(\Pi,\huaD_\Pi)$ being a group homomorphism from $G$ to $\Pi$. Indeed, for the $2$-cocycle $(\alpha, \beta)$ obtained by such a section $s$, we clearly have $\alpha=0$ by Eq.~\eqref{2-cocycle}.

In this situation, we have the following result as a byproduct of Theorem~\ref{cohomology-exact-DG} and Theorem~\ref{th:abel-ext}.
\begin{cor}
For a given representation $(V, T, \Theta)$ of a difference Lie group $(G, \huaD)$,
the difference operators on the semidirect product Lie group $G\ltimes_\Theta V$ are classified by the quotient $\huaH^{2}(\huaD, T)/\frkk^1(\huaH^{1}(G,V))$, where $\frkk^1$ is given by \eqref{connmap}.
\end{cor}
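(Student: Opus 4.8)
The plan is to identify the difference operators on $G\ltimes_\Theta V$ that are compatible with the abelian extension structure with the $2$-cocycles of the difference operator $\huaD$ \emph{alone}, and then to read off the classification from the long exact sequence of Theorem~\ref{cohomology-exact-DG} together with Theorem~\ref{th:abel-ext}. Throughout, ``difference operator on $G\ltimes_\Theta V$'' means a difference operator $\huaD_\Pi$ on the Lie group $G\ltimes_\Theta V$ for which $(G\ltimes_\Theta V,\huaD_\Pi)$ is an abelian extension of $(G,\huaD)$ by $(V,T)$ along the standard inclusion and projection; by Proposition~\ref{pro-repn} (applied to the canonical section $s_0(g)=(g,0)$) such an extension is automatically relative to $(V,T,\Theta)$, and two such operators are identified when the corresponding abelian extensions are isomorphic.

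First I would use the homomorphism section $s_0\colon G\lon G\ltimes_\Theta V$, $s_0(g)=(g,0)$. Let $\huaD_\Pi$ be a difference operator on $G\ltimes_\Theta V$ in the above sense. By the remark preceding the corollary, the pair $(\alpha,\beta)$ associated to $\huaD_\Pi$ and $s_0$ by \eqref{2-cocycle}--\eqref{2-cocycle'} has $\alpha=0$, and $\beta\in\frkC^2(\huaD,T)=C^1(G,V)$ (it is normalized since $\huaD_\Pi(e_\Pi)=e_\Pi$); moreover the cocycle condition $\delta(0,\beta)=0$ reduces to $\dTD\beta=0$. Conversely, for any $\beta\in\frkC^2(\huaD,T)$ with $\dTD\beta=0$, the map $\huaD_\beta$ of \eqref{eq:db} is a difference operator on $G\ltimes_\Theta V$ (this is the $\alpha=0$ instance of the construction in the proof of Theorem~\ref{th:abel-ext}). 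Finally, applying the defining identity of a difference operator to the decomposition $(g,u)=(e_G,u)\cdot_\ltimes(g,0)$ and using $\huaD_\Pi\circ i=i\circ T$ and $p\circ\huaD_\Pi=\huaD\circ p$, one checks that $\huaD_\Pi$ is completely determined by $\beta$, namely $\huaD_\Pi=\huaD_\beta$. Hence $\beta\mapsto\huaD_\beta$ is a bijection from $\{\beta\in\frkC^2(\huaD,T):\dTD\beta=0\}$ onto the set of difference operators on $G\ltimes_\Theta V$.

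Next I would pass to equivalence classes. Since $\frak{i}\colon\oplus_{n}\frkC^n(\huaD,T)\lon\oplus_n C^n(G,\huaD,V,T)$, $\beta\mapsto(0,\beta)$, is the cochain map of Theorem~\ref{cohomology-exact-DG}, we have $[(0,\beta)]=\frak{i}_*[\beta]$ in $\huaH^2(G,\huaD,V,T)$. By Theorem~\ref{th:abel-ext}, $\huaD_\beta$ and $\huaD_{\beta'}$ give isomorphic abelian extensions if and only if $[(0,\beta)]=[(0,\beta')]$, i.e.\ $\frak{i}_*[\beta]=\frak{i}_*[\beta']$. Therefore $\huaD_\beta\mapsto\frak{i}_*[\beta]$ descends to a well-defined bijection between equivalence classes of difference operators on $G\ltimes_\Theta V$ and $\Img\big(\frak{i}_*\colon\huaH^2(\huaD,T)\lon\huaH^2(G,\huaD,V,T)\big)$.

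To conclude, I would invoke exactness of the long sequence in Theorem~\ref{cohomology-exact-DG}: the group homomorphism $\frak{i}_*$ has $\Ker\frak{i}_*=\Img\big(\frkk^1\colon\huaH^1(G,V)\lon\huaH^2(\huaD,T)\big)=\frkk^1(\huaH^1(G,V))$, whence $\Img\frak{i}_*\cong\huaH^2(\huaD,T)/\frkk^1(\huaH^1(G,V))$. Composing with the bijection of the previous paragraph gives the assertion. The only step calling for an actual (but routine) computation is the reconstruction $\huaD_\Pi=\huaD_\beta$ in the first part --- that a compatible difference operator on the semidirect product is recovered from $\beta\in C^1(G,V)$, and that the assignment is injective; everything after that is a diagram chase built on results already proved.
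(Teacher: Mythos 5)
Your proposal is correct and follows essentially the same route as the paper: both identify the compatible difference operators on $G\ltimes_\Theta V$ with cocycles of the form $(0,\beta)$, invoke Theorem~\ref{th:abel-ext} to translate isomorphism of extensions into cohomologous cocycles, and then divide out by $\frkk^1(\huaH^1(G,V))$. The only (harmless) differences are that you make explicit the reconstruction $\huaD_\Pi=\huaD_\beta$ from the homomorphism section and phrase the final step via exactness ($\Ker\frak{i}_*=\Img\,\frkk^1$) rather than the paper's direct computation $(0,\beta-\beta')=\delta(\eta)\Rightarrow\dM^{\Theta}\eta=0,\ \beta-\beta'=\frkK(\eta)$, which is the same fact.
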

\begin{proof}
%
 Let  $(G\ltimes_\Theta V,\huaD_\beta)$ and $(G\ltimes_\Theta V,\huaD_{\beta'})$ be two abelian extensions associated with 2-cocycles $(0, \beta)$ and $(0, \beta')$ respectively in $C^{2}(G, \huaD, V, T)$, where $\huaD_\beta$ is given by \eqref{eq:db}. It is obvious that $\beta$ and $\beta'$ are 2-cocycles in $\frkC^{2}(\huaD, T)$.  By Theorem~\ref{th:abel-ext}, if $(G\ltimes_\Theta V,\huaD_\ltimes)$ and $(G\ltimes_\Theta V,\huaD'_\ltimes)$ are isomorphic, then $(0, \beta)$ and $(0, \beta')$ are in the same cohomology class, i.e.
$$(0, \beta-\beta')=\delta(\eta)=(\dM^{\Theta}\eta,\frkK(\eta))$$
for $\eta\in C^{1}(G, V)$. Therefore,  $\dM^{\Theta}\eta=0$   and $\beta-\beta'=\frkK(\eta)$. This means that $[\beta]=[\beta']$ in $\huaH^{2}(\huaD, T)/\frkk^1(\huaH^{1}(G,V))$.

Conversely, for any 2-cocycle $\beta$ in $\frkC^{2}(\huaD, T)$, define $\huaD_\beta$   by \eqref{eq:db}. Then $(G\ltimes_\Theta V,\huaD_\beta)$ is a difference Lie group. Let $\beta'$ be another 2-cocycle such that
$$[\beta]-[\beta']=[\frkK(\eta)]=\frkk^1([\eta]),$$
for some 1-cocycle $\eta$   in $C^{1}(G, V)$.  Denote by  $(G\ltimes_\Theta V,\huaD_{\beta'})$ the corresponding difference Lie group. Then the smooth map $\sigma:G\times V\lon G\times V$ defined by
$$
\sigma(g, u)=(g, u+\eta(g))
$$
gives rise to an isomorphism from the difference Lie group $(G\ltimes_\Theta V,\huaD_\beta)$ to the difference Lie group $(G\ltimes_\Theta V,\huaD_{\beta'})$.

As a result, the difference operators on $G\ltimes_\Theta V$ are classified by
$\huaH^{2}(\huaD, T)/\frkk^1(\huaH^{1}(G,V))$.
\end{proof}

Further, if there exists a section $s$ of $(\Pi,\huaD_\Pi)$ being a difference Lie group homomorphism from $(G,\huaD)$ to $(\Pi,\huaD_\Pi)$, then the corresponding $2$-cocycle $(\alpha, \beta)=0$ by Eqs.~\eqref{2-cocycle}, \eqref{2-cocycle'}, and $(\Pi,\huaD_\Pi)$ is isomorphic to the semidirect product difference Lie group $(G\ltimes_\Theta V,\Gamma)$ defined in Theorem~\ref{th:semidirect}.


 \end{document}